\newcommand{\BibTeX}{{\scshape Bib}\kern-.08em\TeX}
\newcommand{\T}{\S\kern .15em\relax }
\newcommand{\AMS}{$\mathcal{A}$\kern-.1667em\lower.5ex\hbox
        {$\mathcal{M}$}\kern-.125em$\mathcal{S}$}
\subjclass{14E30, 14H30, 14J45, 14M25}
\author{Pedro \textsc{Montero}}
\address{Univ. Grenoble Alpes, Institut Fourier, F-38000 Grenoble, France.}
\email{pedro.montero@univ-grenoble-alpes.fr}
\title[Fano varieties with a divisor of Picard number one]{On singular Fano varieties with a divisor of Picard number one}
\newcommand{\Exc}{\operatorname{Exc}}
\newcommand{\Pic}{\operatorname{Pic}}
\newcommand{\NE}{\operatorname{NE}}
\newcommand{\N}{\operatorname{N}}
\newcommand{\Locus}{\operatorname{Locus}}
\newcommand{\Sing}{\operatorname{Sing}}
\newcommand{\codim}{\operatorname{codim}}
\newcommand{\redu}{\operatorname{red}}
\newcommand{\cont}{\operatorname{cont}}
\newcommand{\mult}{\operatorname{mult}}
\newcommand{\lisse}{\operatorname{reg}}
\newcommand{\h}{\operatorname{H}}
\newcounter{thm}
\newcounter{cor}
\newcounter{pro}
\newtheorem{thmy}{Theorem}
\newtheorem{coroy}{Corollary}
\newtheorem{propy}{Proposition}
\newenvironment{thmx}{\stepcounter{thm}\begin{thmy}}{\end{thmy}}
\newenvironment{corox}{\stepcounter{cor}\begin{coroy}}{\end{coroy}}
\newenvironment{propx}{\stepcounter{pro}\begin{propy}}{\end{propy}}
\begin{document}

\def\smfbyname{}

\begin{abstract}
In this paper we study the geometry of mildly singular Fano varieties on which there is an effective prime divisor of Picard number one. Afterwards, we address the case of toric varieties. Finally, we treat the lifting of extremal contractions to universal covering spaces in codimension 1.
\end{abstract}

\maketitle

\tableofcontents

\section{Introduction}

The aim of this article is to study the geometry of mildly singular Fano varieties on which there is a prime divisor of Picard number one. Recall that a Fano variety is a normal complex algebraic variety whose anti-canonical divisor has some positive multiple which is an ample Cartier divisor.

A first related result is given by L. Bonavero, F. Campana and J. A. Wi\'sniewski in the sequel of articles \cite{Bo02} and \cite{BCW02}, where the authors classified (toric) Fano varieties of dimension $n\geq 3$ on which there is a divisor isomorphic to $\mathbb{P}^{n-1}$ and later used these results to study (toric) complex varieties whose blow-up at a point is Fano. For instance, in the toric case we have the following result.

\begin{theo}[{\cite[Theorem 2]{Bo02}}]\label{thm: bonavero}
Let $X$ be a smooth toric Fano variety of dimension $n\geq 3$. Then, there exists a toric divisor $D$ of $X$ isomorphic to $\mathbb{P}^{n-1}$ if and only if one of the following situations occurs:
\begin{enumerate}
\item $X\cong \mathbb{P}^{n}$ and $D$ is a linear codimension 1 subspace of $X$.
\item $X\cong \mathbb{P}(\mathcal{O}_{\mathbb{P}^1}\oplus \mathcal{O}_{\mathbb{P}^1}(1)^{\oplus n-1	} )\cong \operatorname{Bl}_{\mathbb{P}^{n-2}}(\mathbb{P}^n)$ and $D$ is a fiber of the projection on $\mathbb{P}^1$.
\item $X\cong \mathbb{P}(\mathcal{O}_{\mathbb{P}^{n-1}}\oplus \mathcal{O}_{\mathbb{P}^{n-1}}(a))$, where $0\leq a \leq n-1$, and $D$ is either the divisor $\mathbb{P}(\mathcal{O}_{\mathbb{P}^{n-1}})$ or the divisor $\mathbb{P}(\mathcal{O}_{\mathbb{P}^{n-1}}(a))$.
\item $X$ is isomorphic to the blow-up of $\mathbb{P}(\mathcal{O}_{\mathbb{P}^{n-1}}\oplus \mathcal{O}_{\mathbb{P}^{n-1}}(a+1))$ along a linear $\mathbb{P}^{n-2}$ contained in the divisor $\mathbb{P}(\mathcal{O}_{\mathbb{P}^{n-1}})$, where $0\leq a \leq n-2$, and $D$ is either the strict transform of the divisor $\mathbb{P}(\mathcal{O}_{\mathbb{P}^{n-1}})$ or the strict transform of the divisor $\mathbb{P}(\mathcal{O}_{\mathbb{P}^{n-1}}(a+1))$.
\end{enumerate}
\end{theo}

In particular, this classification leads $\rho_X\leq 3$. Some years later, T. Tsukioka in \cite{Tsu06} used some arguments from \cite{And85} and \cite{BCW02} to generalize these results and proved, more generally, that a smooth Fano variety $X$ of dimension $n\geq 3$ containing an effective prime divisor of Picard number one must satisfy $\rho_X\leq 3$.

The bound $\rho_X\leq 3$ was recently proved by G. Della Noce in \cite[Remark 5.5]{DN14}, when $X$ is supposed to be a $\mathbb{Q}$-factorial Gorenstein Fano variety of dimension $n\geq 3$ with canonical singularities, with at most finitely many non-terminal points, and under the more general assumption of the existence of an effective prime divisor ${D\subseteq X}$ such that the real vector space $\N_1(D,X):=\operatorname{Im}\left(\N_1(D)\to \N_1(X) \right)$ of numerical classes of 1-cycles on $X$ that are equivalent to 1-cycles on $D$, is one-dimensional. Therefore, we will often consider varieties satisfying the following condition:

\vspace{2mm}

\begin{itemize}
 \item[($\dag$)] $X$ is a $\mathbb{Q}$-factorial normal variety of dimension $n\geq 3$ with Gorenstein canonical singularities, with at most finitely many non-terminal points.
\end{itemize}

\vspace{2mm}

In the smooth case, C. Casagrande and S. Druel provide in \cite{CD12} a classification (and examples) of all cases with maximal Picard number $\rho_X=3$.

\begin{theo}[{\cite[Theorem 3.8]{CD12}}] Let $X$ be a Fano manifold of dimension $n\geq 3$ and $\rho_X=3$. Let $D\subseteq X$ be a prime divisor with $\dim_\mathbb{R}\N_1(D,X)=1$. Then $X$ is isomorphic to the blow-up of a Fano manifold $Y\cong \mathbb{P}_Z(\mathcal{O}_Z\oplus \mathcal{O}_Z(a))$ along an irreducible subvariety of dimension $(n-2)$ contained in a section of the $\mathbb{P}^1$-bundle $\pi:Y\to Z$, where $Z$ is a Fano manifold of dimension $(n-1)$ and $\rho_Z=1$. 

\end{theo} 

Firstly, we recall in \textsection \ref{section:extremalcontractions} that a mildly singular Fano variety $X$ always has an extremal ray $R\subseteq \overline{\NE}(X)$ whose intersection with a given effective divisor is positive. The rest of \textsection \ref{section:extremalcontractions} is devoted to the study of these extremal contractions in the case that the given divisor has Picard number one. This allows us to prove the following result in \textsection \ref{section:rho=3}.

\begin{thmx}\label{thm:rho=3}
Let $X$ be a Fano variety satisfying $(\dag)$. Assume that there exists an effective prime divisor $D\subseteq X$ such that ${\dim_\mathbb{R}\N_1(D,X)=1}$ and that $\rho_X=3$. Then, there is a commutative diagram
\[
\xymatrix{ & X \ar[ld]_{\widehat{\sigma}} \ar[rd]^{\sigma} \ar[dd]^\varphi & \\
\widehat{Y} \ar[rd]_{\widehat{\pi}} & & Y \ar[ld]^{\pi} \\
& Z &
}
\]
where $\sigma$ (resp. $\widehat{\sigma}$) corresponds to a divisorial contraction of an extremal ray ${R\subseteq \overline{\NE}(X)}$ (resp. $\widehat{R}\subseteq \overline{\NE}(X)$) which is the blow-up in codimension two of an irreducible subvariety of dimension $(n-2)$, and $\varphi$ is a contraction of fiber type, finite over $D$, corresponding to the face $R+\widehat{R}\subseteq \overline{\NE}(X)$. Moreover, $D\cdot R>0$, $Y$ and $\widehat{Y}$ are $\mathbb{Q}$-factorial varieties with canonical singularities and with at most finitely many non-terminal points, $Y$ is Fano and $Z$ is a $\mathbb{Q}$-factorial Fano variety with log terminal singularities. In particular, $Z$ has only rational singularities.

\end{thmx}

The results of S. Cutkosky on the contractions of terminal Gorenstein threefolds \cite{Cu88}, together with the previous result imply the following corollary.

\begin{corox}\label{thm:threefold}
Let $X$ be a $\mathbb{Q}$-factorial Gorenstein Fano threefold with terminal singularities. Assume that there exists an effective prime divisor $D\subseteq X$ such that $\dim_\mathbb{R}\N_1(D,X)=1$ and that $\rho_X=3$. Then, $X$ is factorial and it can be realized as the blow-up of a smooth Fano threefold $Y$ along a locally complete intersection curve $C\subseteq Y$. Moreover, $Y$ is isomorphic to $\mathbb{P}(\mathcal{O}_{\mathbb{P}^2}\oplus \mathcal{O}_{\mathbb{P}^2}(a))$, where $0\leq a \leq 2$.
\end{corox}

In the case $\rho_X=2$, we obtain in \textsection \ref{section:rho=2} an extension of \cite[Remark 3.2, Proposition 3.3]{CD12} to mildly singular Fano varieties $X$ with $\rho_X=2$, on which there is an effective prime divisor of Picard number one.

\setcounter{thmy}{2}

In order to extend the classification results to higher dimensions, we will restrict ourselves to the case of toric varieties. In that case, the combinatorial description of the MMP for toric varieties treated in \textsection \ref{section:toricmmp}, as well as some particular properties of them, will allow us to prove the following result in \textsection \ref{section:rho=3toric}.

\begin{thmx}\label{thm:rho=3toric}
Let $X$ be a toric Fano variety satisfying $(\dag)$. Assume that there exists an effective prime divisor $D\subseteq X$ such that ${\dim_\mathbb{R}\N_1(D,X)=1}$ and that ${\rho_X=3}$. Then, there exist $\mathbb{Q}$-factorial Gorenstein toric Fano varieties $Y$ and $Z$, with terminal singularities, such that
\begin{enumerate}
 \item $X\cong \overline{\operatorname{Bl}_A(Y)}$, the normalized blow-up of an invariant toric subvariety $A\subseteq Y$ of dimension $(n-2)$; and
 \item $Y\cong \mathbb{P}_Z(\mathcal{O}_Z\oplus \mathcal{O}_Z(a))$ with $0\leq a\leq i_Z-1$, where $i_Z$ is the Fano index of $Z$ and $\mathcal{O}_Z(1)$ is the ample generator of $\Pic(Z)$.
\end{enumerate}
If $\dim X \leq 4$, then $X$ is smooth and we are in the situation of Theorem \ref{thm: bonavero}, case (4).
\end{thmx}

In the toric setting, we obtain in \textsection \ref{section:rho=2toric} results that extend L. Bonavero's description of the extremal contractions in the case $\rho_X=2$ to mildly singular toric Fano varieties. If $X$ is supposed to have isolated canonical singularities then we obtain the following classification.

\begin{thmx}\label{thm:rho=2 toric div isolated}
 Let $X$ be a $\mathbb{Q}$-factorial Gorenstein toric Fano variety of dimension $n\geq 3$ with isolated canonical singularities. Assume that there exists an effective prime divisor $D\subseteq X$ such that $\dim_\mathbb{R}\N_1(D,X)=1$ and that $\rho_X=2$. Then, either
\begin{enumerate}
 \item $X\cong \mathbb{P}(\mathcal{O}_{\mathbb{P}^{n-1}}\oplus \mathcal{O}_{\mathbb{P}^{n-1}}(a))$, with $0\leq a \leq n-1$. In other words, we are in the situation of Theorem \ref{thm: bonavero}, case (3).
\item $X$ is isomorphic to the blow-up of a toric variety $Y$ along an invariant subvariety $A\subseteq Y$ of dimension $(n-2)$, contained in the smooth locus of $Y$. Moreover, $Y$ is isomorphic to either
\begin{itemize}
\item[(a)] $\mathbb{P}^n$,
\item[(b)] $\mathbb{P}(1^{n-1},2,n+1)$ if $n$ is even, or
\item[(c)] $\mathbb{P}(1^{n-1},a,b)$, where $1\leq a < b \leq n $ are two relatively prime integers such that $a|(n-1+b)$ and $b|(n-1+a)$.
\end{itemize}
In particular, $Y$ is a $\mathbb{Q}$-factorial Gorenstein Fano variety with $\rho_Y=1$ and it has at most two singular points. Conversely, the blow-up of any of the listed varieties $Y$ along an invariant irreducible subvariety $A\subseteq Y$ of dimension $(n-2)$ and contained in the smooth locus of $Y$, leads to a toric variety $X$ satisfying the hypothesis.
\end{enumerate}
\end{thmx}

Moreover, in the case of contractions of fiber type we obtain the following result without the assumption of isolated singularities.

\begin{propx}\label{thm:rho=2 toric fibration}
 Let $X$ be a toric Fano variety satisfying $(\dag)$. Assume that there exists an effective prime divisor $D\subseteq X$ such that $\dim_\mathbb{R}\N_1(D,X)=1$ and that $\rho_X=2$. Let $R\subseteq \overline{\NE}(X)$ be an extremal ray such that $D\cdot R > 0$ and assume that the corresponding extremal contraction $\pi:X\to Y$ is of fiber type. Then, $X \cong \mathbb{P}_Y(\mathcal{O}_Y\oplus \mathcal{O}_Y(a))$. Moreover, $Y$ is a $\mathbb{Q}$-factorial Gorenstein Fano variety of dimension $(n-1)$ with terminal singularities and Fano index $i_Y$, and $0\leq a \leq i_Y - 1$. In particular, $X$ has only terminal singularities.
\end{propx}

Finally, \textsection \ref{section:toric1coverings} is devoted to show that the extremal contractions studied in \textsection \ref{section:rho=2toric} lift to quasi-\'etale universal covers, introduced by W. Buczy\'nska \cite{Buc02} in order to study toric varieties of Picard number one. See Definition \ref{PWS} for the notion of Poly Weighted Space (PWS), introduced by M. Rossi and L. Terracini in \cite{RT15a} and proved to be quasi-\'etale universal covers for $\mathbb{Q}$-factorial toric varieties of arbitrary Picard number. 

In particular, we obtain the following description of divisorial contractions of toric mildly Fano varieties with Picard number two. It should be noticed that even if the combinatorial description of these divisorial contractions is very simple (see Lemma \ref{toricdivisorial}) and it coincides with the one of the blow-up of a subvariety of dimension $(n-2)$ in the smooth case, it may happen that the morphisms are not globally a blow-up of the coherent sheaf of ideals of a (irreducible and reduced) subvariety but only a blow-up in codimension two if the singularities are not isolated (see Example \ref{contreexample}).

\begin{propx}\label{prop:1-covering,divisorial}
 Let $X$ be a toric Fano variety satisfying $(\dag)$. Assume that there exists an effective prime divisor $D\subseteq X$ such that $\dim_\mathbb{R}\N_1(D,X)=1$ and that $\rho_X=2$. Let $R\subseteq \overline{\NE}(X)$ be an extremal ray such that $D\cdot R > 0$ and let us denote by $\pi:X\to Y$ the corresponding extremal contraction. Assume that $\pi$ is birational. Then there exist weights $\lambda_0,\ldots,\lambda_n\in \mathbb{Z}_{>0}$ and a cartesian diagram of toric varieties
$$
\xymatrix{
\widehat{X} \ar[r]^{\widehat{\pi}\hspace{1cm}} \ar[d]_{\pi_X} & \mathbb{P}(\lambda_0,\ldots,\lambda_{n}) \ar[d]^{\pi_Y} \\
X \ar[r]^\pi & Y
}
$$
where vertical arrows denote the corresponding canonical quasi-\'etale universal covers, and $\widehat{X}$ is a Fano PWS satisfying $(\dag)$ such that $\rho_{\widehat{X}}=2$. Moreover, ${\widehat{\pi}:\widehat{X}\to \mathbb{P}(\lambda_0,\ldots,\lambda_{n})}$ is a divisorial contraction sending its exceptional divisor ${\widehat{E}\subseteq \widehat{X}}$ onto an invariant subvariety $\widehat{A}\subseteq \mathbb{P}(\lambda_0,\ldots,\lambda_{n})$ of dimension $(n-2)$.
\end{propx}

\subsubsection*{{\bf Acknowledgements}}

I would like to thank my supervisors, St\'ephane \textsc{Druel} and Catriona \textsc{Maclean}, for their unstinting help and their valuable guidance. I also thank Yuri \textsc{Prokhorov} and Mikhail \textsc{Zaidenberg} for fruitful discussions, as well as Miles \textsc{Reid} for kindly communicating me that the restriction of a toric extremal contraction to its exceptional locus may not be flat. Last, but not least, I would like to express my gratitude to the anonymous referee for the suggestions and comments on the first version of this article.

\section{Notation and preliminary results}

Through out this article all varieties will be assumed to be reduced and irreducible schemes of finite type over the field $\mathbb{C}$ of complex numbers\footnote{Most of the results are true over an arbitrary algebraically closed field of characteristic zero, but for Corollary \ref{D not ample} to hold, the field must be also uncountable.}. Its smooth locus will be denoted by $X_{\lisse}\subseteq X$, while $\Sing(X)=X\setminus X_{\lisse}$ denotes its singular locus. 

We will use the notation and results of the Minimal Model Program (MMP for short) in \cite{KM98}. For instance, if $X$ is a normal projective variety then we denote by $\N_1(X)$ the real vector space of 1-cycles with real coefficients modulo numerical equivalence and $\overline{\NE}(X)\subseteq \N_1(X)$ the {\it Mori cone}, which is the closed and convex cone generated by numerical classes of effective 1-cycles. We denote by $[C]$ the class of $C$ in $\N_1(X)$.

Let $Z\subseteq X$ be a closed subset and $\iota:Z \to X$ be the natural inclusion, we define
$$\N_1(Z,X)=\iota_*\N_1(Z)\subseteq \N_1(X). $$ 

For us, a divisor will always be a Weil divisor. Let us denote by $K_X$ the class of a canonical divisor in $\operatorname{Cl}(X)$. A complete normal variety $X$ is said to be a Fano variety if there exists a positive multiple of $-K_X$ which is Cartier and ample. 

We follow the usual convention, and we say that $X$ is a $\mathbb{Q}$-Gorenstein variety if some positive multiple of $K_X$ is a Cartier divisor; we do not require Cohen-Macaulay singularities unless $K_X$ is already Cartier. In this case, the Gorenstein index of $X$ is the smallest positive integer $\ell\in \mathbb{Z}_{>0}$ such that $\ell K_X$ is a Cartier divisor. 

In the same way as for $1-$cycles, we can define $\N^1(X)$ as the vector space of $\mathbb{Q}$-Cartier divisors with real coefficients, modulo numerical equivalence $\equiv$. We denote by $[D]$ the class of $D$ in $\N^1(X)$. We have that $\dim_\mathbb{R}\N_1(X)=\dim_\mathbb{R}\N^1(X)=\rho_X$, which is called the Picard number of $X$.

We denote
$$\Locus(R)=\bigcup_{[C]\in R}C \subseteq X. $$

If $D$ is a $\mathbb{Q}$-Cartier divisor and $R\subseteq \overline{\NE}(X)$ is an extremal ray, then the sign $D\cdot R>0$ (resp. $=, <$) is well defined.

A contraction of $X$ is a projective surjective morphism $\varphi:X\to Y$ with connected fibers, where $Y$ is a normal projective variety. In particular, $\varphi_* \mathcal{O}_X = \mathcal{O}_Y$.

We refer the reader to \cite[\textsection 2.3]{KM98} for the notion of singularities of pairs for $\mathbb{Q}$-factorial normal varieties and to \cite[Chapter 3]{KM98} for the Cone Theorem. For instance, we say that a $\mathbb{Q}$-factorial normal variety $X$ is log terminal if $(X,0)$ is a klt pair.

In the case when $X$ is a log terminal Fano variety the Mori cone of $X$ is finite rational polyhedral, generated by classes of rational curves; in particular $\overline{\NE}(X)=\NE(X)$. Moreover, Kawamata-Viehweg theorem implies in that case that $\h^i(X,\mathcal{O}_X)=\{0\}$ for $i>0$. After the works of Birkar, Cascini, Hacon and McKernan, if $X$ is a log terminal Fano variety, then $X$ is a Mori Dream Space (see \cite[Corollary 1.3.2]{BCHM10} and \cite{HK00}).

For us, a $\mathbb{P}^1$-bundle is a smooth morphism all of whose fibers are isomorphic to $\mathbb{P}^1$. 

Let $E$ be a vector bundle over a variety $X$, we denote by $\mathbb{P}_X(E)$ the scheme $\mathbf{Proj}_X \left(\bigoplus_{d\geq 0}S^d(E) \right)$.

Finally, we recall that if $\lambda_0,\ldots,\lambda_n$ are positive integers with $\gcd(\lambda_0,\ldots,\lambda_n)=1$ then we define the associated Weighted Projective Space (WPS) $\mathbb{P}(\lambda_0,\ldots,\lambda_n)$, which is a toric variety with Picard number one and torsion-free class group. In general, we say that a $\mathbb{Q}$-factorial complete toric variety is a Poly Weighted Space (PWS) if its class group is torsion-free.

\section{Study of the extremal contractions}\label{section:extremalcontractions}

In this section we study extremal contractions of mildly singular Fano varieties $X$ that admits an effective prime divisor $D\subseteq X$ such that $\dim_\mathbb{R}\N_1(D,X)=1$. 

Firstly, notice that log terminal Fano varieties always have an extremal ray whose intersection with a given prime divisor is positive (c.f. \cite[Lemma 2]{BCW02}).

\begin{lemm}\label{Rexists}

Let $X$ be a $\mathbb{Q}$-factorial log terminal Fano variety and let $D\subseteq X$ be an effective prime divisor. Then, there exists an extremal ray $R\subseteq \overline{\NE}(X)$ such that $D\cdot R > 0$.

\end{lemm}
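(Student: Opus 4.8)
The plan is to argue by contradiction, using the fact that $X$ is a log terminal Fano variety so that $\overline{\NE}(X) = \NE(X)$ is a finite rational polyhedral cone generated by finitely many extremal rays $R_1, \ldots, R_k$. First I would suppose, for contradiction, that $D \cdot R_i \leq 0$ for every extremal ray $R_i$. Since $D$ is a $\mathbb{Q}$-Cartier divisor (as $X$ is $\mathbb{Q}$-factorial) the intersection number $D \cdot R_i$ is well defined, and the sign condition on all the generators of the cone controls the sign of $D \cdot \gamma$ for every class $\gamma \in \overline{\NE}(X)$.

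\medskip

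\textbf{Deriving nefness of $-D$.} The key step is to observe that every class in $\overline{\NE}(X)$ is a nonnegative combination of the $[C_i]$ generating the extremal rays $R_i$. Hence, under the contradiction hypothesis, $D \cdot \gamma \leq 0$ for all $\gamma \in \overline{\NE}(X)$, which says precisely that the divisor $-D$ is nef. By Kleiman's criterion $-D$ then lies in the closure of the ample cone. Now I would intersect $-D$ with a curve contained in $D$ itself: since $D$ is an effective prime divisor, I may choose a general complete intersection curve $C \subseteq D$ (cutting $D$ with suitable ample divisors) that moves in a family covering $D$. Because $-D$ is nef, $(-D) \cdot C \geq 0$, i.e. $D \cdot C \leq 0$; but $C \subseteq D$ moves and covers $D$, which forces $D \cdot C \geq 0$ as well, and in fact one obtains a contradiction with the effectivity and positivity of the intersection of an ample class restricted to $D$.

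\medskip

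\textbf{The main obstacle and how to resolve it.} The delicate point is to produce a curve $C$ on which $-D$ has \emph{strictly} negative degree, so as to contradict nefness cleanly. Here the effectivity of $D$ is essential: one uses that the normal bundle (or the self-intersection behavior) of an effective divisor cannot be everywhere negative along curves that sweep it out, since $D$ carries ample divisors whose restriction has positive degree on covering curves. Concretely, I expect to fix an ample Cartier divisor $H$ on $X$, restrict it to $D$, and take $C$ to be a curve in the linear system $|(H|_D)^{n-2}|$ (a $1$-cycle obtained by intersecting with $n-2$ general members), so that $C$ sweeps out $D$. Then $-D$ nef gives $D \cdot C \leq 0$, while the projection formula together with the effectivity of $D$ and positivity of $H$ forces $D \cdot C$ to be computable as a positive multiple of $\deg_D(H|_D)^{n-1} > 0$ whenever $D$ is not numerically trivial. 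Since $D \neq 0$ in $\N^1(X)$ (an effective prime divisor is never numerically trivial on a projective variety with the given ampleness), this is the desired contradiction. The real work is thus in choosing the covering family and invoking that an effective prime divisor has positive intersection with an ample class along a covering curve; once nefness of $-D$ is contradicted, the existence of an extremal ray $R$ with $D \cdot R > 0$ is immediate.
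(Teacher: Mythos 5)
Your overall strategy (contrapositive: if $D\cdot R_i\le 0$ for every extremal ray of the polyhedral cone $\NE(X)$ then $-D$ is nef, so it suffices to exhibit one curve with $D\cdot C>0$) is logically equivalent to the paper's direct argument, which simply takes any curve $C$ with $D\cdot[C]>0$ and decomposes $[C]=\sum_i a_i[C_i]$ with $a_i\ge 0$. The problem is in how you produce that curve. You choose $C$ \emph{inside} $D$, as a complete intersection $(H|_D)^{n-2}$, and claim both that "$C$ moves and covers $D$" forces $D\cdot C\ge 0$ and that $D\cdot C$ is a positive multiple of $\deg_D(H|_D)^{n-1}$. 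Both claims are false. For $C\subseteq D$ one has $D\cdot C = D^2\cdot H^{n-2} = \deg_C(\mathcal{O}_D(D)|_C)$, a self-intersection term with no reason to be nonnegative: on the Fano threefold $X=\operatorname{Bl}_p\mathbb{P}^3$ with $D=E$ the exceptional divisor, the lines $\ell\subseteq E\cong\mathbb{P}^2$ move and cover $E$, yet $E\cdot\ell=-1$. The principle that moving curves meet effective divisors nonnegatively requires the family to cover $X$, not merely the divisor $D$ itself.

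The gap is easy to close: take instead a curve \emph{not} contained in $D$ that meets $D$ --- for instance a general complete intersection curve $C=H^{n-1}$ of ample divisors on $X$, for which $D\cdot C = D\cdot H^{n-1}>0$ because $D$ is a nonzero effective divisor; or any irreducible curve joining a point of $D$ to a point off $D$. For such a $C$ the restriction of $\mathcal{O}_X(mD)$ to $C$ is a nonzero effective Cartier divisor, so $D\cdot C>0$, contradicting nefness of $-D$. With that one substitution your argument becomes correct and coincides in substance with the paper's proof, which takes the existence of a curve with $D\cdot[C]>0$ for granted and concludes immediately from the decomposition of $[C]$ along the extremal rays.
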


\begin{proof}
The Cone Theorem \cite[Theorem 3.7]{KM98} implies that $\overline{\NE}(X)=\NE(X)$ is a rational polyhedral cone generated by a finite number of extremal rays $R_1,\ldots,R_s$. Let $C\subseteq X$ be any curve such that $D\cdot [C]>0$. Since $C$ is numerically equivalent to a positive sum of extremal curves, $[C]=\sum_{i=1}^s a_i [C_i]$ with $a_i\geq 0$ and $[C_i]\in R_i$, we can pick one such that $D\cdot R_i>0$.
\end{proof}

Secondly, we have that the contraction of an extremal ray whose intersection with an effective prime divisor of Picard number one is positive has at most one-dimensional fibers.

\begin{lemm}\label{fibersdim1}

Let $X$ be a $\mathbb{Q}$-factorial log terminal Fano variety and $D\subseteq X$ be an effective prime divisor such that $\dim_\mathbb{R}\N_1(D,X)=1$. Let us suppose that $\rho_X > 1$ and let $R\subseteq \overline{\NE} (X)$ be an extremal ray such that $D\cdot R>0$. Then, $R \nsubseteq \N_1(D,X)$. In particular, the extremal contraction $\cont_R$ is finite on $D$ and all the fibers of $\cont_R$ are at most of dimension 1.

\end{lemm}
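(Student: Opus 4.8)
The plan is to establish first the purely numerical assertion $R\not\subseteq\N_1(D,X)$; both the finiteness of $\cont_R$ on $D$ and the bound on the dimension of its fibres will then follow formally. Write $f=\cont_R\colon X\to T$ for the contraction of $R$. Since $\N_1(D,X)$ is a line through the origin and $R$ is a ray, and since $\overline{\NE}(X)$ is strictly convex ($X$ being projective) so that it meets that line in a single ray, there are only two possibilities: either $R\subseteq\N_1(D,X)$ or $R\cap\N_1(D,X)=\{0\}$. The whole point is therefore to exclude the first alternative.

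So I would argue by contradiction and assume $R\subseteq\N_1(D,X)$. The key observation, which I expect to be the heart of the argument, is that because $f$ is the contraction of a single extremal ray one has $\rho_{X/T}=1$, so $\overline{\NE}(X/T)=R$; by the relative Kleiman criterion the hypothesis $D\cdot R>0$ then says exactly that $D$ is $f$-ample. On the other hand, under our assumption $\overline{\NE}(X)\cap\N_1(D,X)=R$, and the class of every irreducible curve contained in $D$ lies in $\iota_*\overline{\NE}(D)\subseteq\overline{\NE}(X)\cap\N_1(D,X)=R$; hence every curve of $D$ is contracted by $f$, and $f(D)$ is a single point, i.e. $D$ is contained in one fibre $F_0=f^{-1}(p)$. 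The contradiction will come from the incompatibility between ``$D$ is $f$-ample'' and ``$D$ lies in a fibre''.

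Concretely, I would split according to the type of $f$. If $f$ is of fibre type, then every fibre has dimension $\geq 1$, and a general fibre $F$ is disjoint from $F_0\supseteq D$; thus $\mathcal{O}_X(D)|_F\cong\mathcal{O}_F$ is trivial, hence not ample on the positive-dimensional $F$, contradicting $f$-ampleness. If instead $f$ is birational, then $D\subseteq\Exc(f)$; since $D$ has codimension one, $f$ cannot be small, so it is divisorial and $\Exc(f)$ is a prime divisor $E$ with $E\cdot R<0$. As $D\subseteq E$ are irreducible of the same dimension we get $D=E$ and $D\cdot R<0$, again contradicting $D\cdot R>0$. Either way the assumption is absurd, so $R\not\subseteq\N_1(D,X)$, and therefore $R\cap\N_1(D,X)=\{0\}$.

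Finally I would deduce the two stated consequences. No irreducible curve $C\subseteq D$ can satisfy $[C]\in R$, for otherwise $0\neq[C]\in R\cap\N_1(D,X)$; thus $f|_D$ contracts no curve and, being proper with finite fibres, is finite. For the fibre dimension, suppose some fibre $F$ of $f$ had $\dim F\geq 2$. Pick a curve $C\subseteq F$ with $[C]\in R$; then $D\cdot C>0$ forces $C\not\subseteq D$ (otherwise $[C]\in R\cap\N_1(D,X)=\{0\}$) and $D\cap F\neq\emptyset$. Hence, $D$ being $\mathbb{Q}$-Cartier, $D\cap F$ is a nonempty divisor in $F$, of dimension $\dim F-1\geq 1$, so it contains a curve $C'$. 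But $C'\subseteq F$ gives $[C']\in R$ and $C'\subseteq D$ gives $[C']\in\N_1(D,X)$, whence $0\neq[C']\in R\cap\N_1(D,X)=\{0\}$, a contradiction. Therefore every fibre of $\cont_R$ has dimension at most one.
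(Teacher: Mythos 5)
Your argument is correct, and it is essentially the proof the paper has in mind: the paper simply cites \cite[Lemma 3.1]{CD12} and asserts that the smooth-case proof carries over verbatim, and your write-up is precisely that argument spelled out (assume $R\subseteq\N_1(D,X)$, deduce that $\cont_R(D)$ is a point, rule out the fiber-type and birational cases, then get finiteness on $D$ and the fiber-dimension bound by intersecting a would-be two-dimensional fiber with the $\mathbb{Q}$-Cartier divisor $D$). The only ingredients beyond the smooth case are standard facts about extremal contractions of $\mathbb{Q}$-factorial log terminal varieties that you invoke correctly --- the relative Kleiman criterion, and the dichotomy that a non-small birational extremal contraction has irreducible exceptional divisor $E$ with $E\cdot R<0$ --- so there is no gap.
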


\begin{proof}
The proof in the smooth case \cite[Lemma 3.1]{CD12} generalizes verbatim to this setting.
\end{proof}

G. Della Noce proved in \cite[Theorem 2.2]{DN14} that a projective variety (not necessarily Fano) satisfying ($\dag$) has no small $K$-negative extremal contractions having all the fibers of dimension at most 1 (cf. \cite[Example 2.11]{DN14}). In our context we have the following result. 

\begin{prop}\label{extcont}
Let $X$ be a $\mathbb{Q}$-factorial log terminal Fano variety of dimension $n\geq 3$. Let us suppose that $\rho_X>1$ and that there exists an effective prime divisor such that $\dim_\mathbb{R}\N_1(D,X)=1$. Let $R\subseteq \overline{\NE}(X)$ be an extremal ray such that $D\cdot R>0$ and let us denote by $\varphi_R:X\to X_R$ the corresponding extremal contraction. Then,
\begin{enumerate}
\item If $\varphi_R$ is of fiber type, then $X_R$ is a $\mathbb{Q}$-factorial log terminal Fano variety of dimension $(n-1)$ such that $\rho_{X_R}=1$. In particular, $X_R$ has only rational singularities.
\item If $\varphi_R$ is birational and we suppose that $X$ satisfies $(\dag)$, then $\varphi_R$ is a divisorial contraction and there exists a closed subset $S\subseteq X_R$ with $\codim_{X_R}(S)\geq 3$ such that $X_R \smallsetminus S \subseteq X_{R,\lisse}$, $\codim_X \varphi_R^{-1}(S)\geq 2$, $X\smallsetminus \varphi^{-1}(S) \subseteq X_{\lisse}$ and
  $$\varphi_R|_{X\smallsetminus \varphi_R^{-1}(S)}:X\smallsetminus \varphi_R^{-1}(S) \longrightarrow X_R \smallsetminus S$$
is the blow-up of a $(n-2)$-dimensional smooth subvariety in $X_R\smallsetminus S$. Moreover, $X_R$ is a $\mathbb{Q}$-factorial Fano variety with canonical singularities with at most finitely many non-terminal points. In particular, 
$$K_X\cdot [F]=E\cdot [F]=-1,$$
for every irreducible curve $F$ such that $[F]\in R$, where $E=\Exc(\varphi_R)$ is the exceptional divisor of $\varphi_R$.
\end{enumerate}
\end{prop}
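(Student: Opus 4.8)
The plan is to treat the two cases separately, in both exploiting the finiteness of $\varphi_R$ on $D$ (Lemma \ref{fibersdim1}) together with the hypothesis $\dim_\mathbb{R}\N_1(D,X)=1$.

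\emph{The fiber type case (1).} By Lemma \ref{fibersdim1} the restriction $\psi := \varphi_R|_D : D \to X_R$ is finite and all fibers of $\varphi_R$ have dimension at most $1$; since $\varphi_R$ is of fiber type this forces $\dim X_R = n-1$ and $\psi$ to be surjective. The crux is that $\rho_{X_R}=1$: as $\psi$ is finite and surjective, every curve in $X_R$ is the image of a curve in $D$, so $\psi_* : \N_1(D) \to \N_1(X_R)$ is surjective; but $\psi_*$ factors as $\N_1(D) \to \N_1(D,X) \xrightarrow{\varphi_{R*}} \N_1(X_R)$, and $\dim_\mathbb{R}\N_1(D,X)=1$, whence $\rho_{X_R}\leq 1$, with equality since $X_R$ is projective of positive dimension. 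That $X_R$ is $\mathbb{Q}$-factorial and log terminal is standard for fiber type contractions of $\mathbb{Q}$-factorial klt varieties (\cite{KM98}). To see that $X_R$ is Fano, note that $-K_X$ is $\varphi_R$-ample; restricting $\varphi_R$ over a general complete intersection curve $C\subseteq X_R$ yields a fibration whose total surface carries $-K_X$ as an ample class, and the canonical bundle formula forces $-K_{X_R}\cdot C>0$. As $\rho_{X_R}=1$, this means $-K_{X_R}$ is ample. Finally, log terminal singularities are rational (\cite{KM98}).

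\emph{The birational case (2).} First, $\varphi_R$ cannot be small: all its fibers have dimension at most $1$ by Lemma \ref{fibersdim1}, $R$ is $K_X$-negative, and \cite[Theorem 2.2]{DN14} forbids such small extremal contractions under hypothesis $(\dag)$. Hence $\varphi_R$ is divisorial, with exceptional prime divisor $E$; since $\dim E = n-1$ and the fibers of $\varphi_R|_E$ have dimension exactly $1$, the center $B := \varphi_R(E)$ has dimension $n-2$. The structure statement is obtained by removing a bad locus $S\subseteq X_R$: I would let $S$ be the union of $\Sing(X_R)$, the finitely many images of non-terminal points of $X$, and the locus over which the contraction fails to be the blow-up of a smooth center. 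Because $(\dag)$ provides only finitely many non-terminal points and terminal Gorenstein singularities are smooth in codimension $2$, both $\Sing(X_R)$ and these exceptional loci have codimension $\geq 3$; over $X_R\smallsetminus S$ the target is smooth, $X$ is smooth, and the local analysis of divisorial contractions with one-dimensional fibers (as in \cite{And85} for the smooth case, carried to this setting in \cite{DN14}) identifies $\varphi_R$ with the blow-up of the smooth $(n-2)$-dimensional subvariety $B\smallsetminus S$. The transfer of properties to $X_R$ ($\mathbb{Q}$-factorial, canonical, finitely many non-terminal points, Fano) follows as in case (1): $\mathbb{Q}$-factoriality from the extremal contraction, the singularity bounds from the discrepancy computation below, and Fano-ness by pushing forward the ample class $-K_X$ and invoking the cone theorem.

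\emph{The intersection numbers.} Since $X_R$ has canonical singularities and $\varphi_R$ is divisorial, we may write $K_X = \varphi_R^*K_{X_R} + aE$ for a well-defined discrepancy $a$; the blow-up description over $X_R\smallsetminus S$ shows $a=1$. Thus for any irreducible curve $F$ with $[F]\in R$ we get $\varphi_R^*K_{X_R}\cdot [F]=0$, and hence $K_X\cdot[F]=E\cdot[F]$. Evaluating on a general fiber $F_0\cong\mathbb{P}^1$ over $B\smallsetminus S$ gives $E\cdot[F_0]=-1$, and the equality $K_X\cdot[F]=E\cdot[F]=-1$ for \emph{every} irreducible $F$ with $[F]\in R$ then amounts to the contraction having length one, i.e.\ every contracted irreducible curve being numerically primitive. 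I expect the main obstacle to be the structure statement in (2): simultaneously controlling $\Sing(X_R)$, the non-terminal loci, and the degeneration locus of the blow-up so that their union $S$ has codimension $\geq 3$ while retaining the clean blow-up description over the complement. The delicate point, as noted in the introduction, is that $\varphi_R$ need only be a blow-up \emph{in codimension two} rather than the blow-up of an honest ideal sheaf, so one must argue on the smooth locus and then propagate the discrepancy and length-one computations across the small set $S$.
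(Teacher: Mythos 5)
Your case (2) and the skeleton of case (1) (finiteness on $D$ via Lemma \ref{fibersdim1}, hence $\dim X_R=n-1$, and $\rho_{X_R}=1$ because $\psi_*$ factors through the one-dimensional space $\N_1(D,X)$) match the paper's argument; in particular the paper also disposes of the birational case by citing \cite[Theorem 2.2]{DN14}, which already packages the divisoriality, the ``blow-up in codimension two'' structure, the singularity statement for $X_R$, and the equalities $K_X\cdot[F]=E\cdot[F]=-1$, so your longer sketch there is harmless.

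The genuine gap is in case (1), at the two points where you need that $X_R$ is log terminal and Fano. The claim that log terminality of the base of a fiber-type contraction of a $\mathbb{Q}$-factorial klt variety is ``standard (\cite{KM98})'' is not correct: this is not in \cite{KM98}, and it is precisely the nontrivial input of the proof. The paper invokes \cite[Lemma 2.8]{PS09}: since $X$ is of Fano type, there is an effective $\mathbb{Q}$-divisor $\Delta_{X_R}$ with $(X_R,\Delta_{X_R})$ klt and $-(K_{X_R}+\Delta_{X_R})$ ample; then $-K_{X_R}=-(K_{X_R}+\Delta_{X_R})+\Delta_{X_R}$ is big, hence ample because $\rho_{X_R}=1$, and $(X_R,0)$ is klt by \cite[Corollary 2.35]{KM98}. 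Your substitute argument for Fano-ness does not work as written: restricting to the surface $S=\varphi_R^{-1}(C)$ over a general complete intersection curve $C$, the canonical bundle formula controls $K_S=(K_X+\varphi_R^*A_1+\cdots+\varphi_R^*A_{n-2})|_S$, not $-K_X|_S$, and $-K_S$ need not be ample; so the formula gives information about $K_C$ rather than about $K_{X_R}\cdot C$. Making a canonical-bundle-formula argument rigorous here is essentially the content of the Prokhorov--Shokurov lemma, not a one-line reduction. With that reference supplied, the rest of your case (1), including rationality of the singularities, goes through.
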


\begin{proof}
Let us suppose that $\varphi_R:X\to X_R$ is of fiber type. Then, $\varphi_R|_D:D\to X_R$ is a finite morphism, by Lemma \ref{fibersdim1}, and thus $\dim_\mathbb{C} X_R=n-1$. Since $X_R$ is a $\mathbb{Q}$-factorial projective variety, the projection formula and the fact that $\dim_\mathbb{R}\N_1(D,X)=1$ imply that $\rho_{X_R}=1$. In particular, every big divisor on $X_R$ is ample.

Let us prove that $X_R$ is a log terminal Fano variety in this case. Since $X$ is log terminal and Fano, there exists an effective $\mathbb{Q}$-divisor $\Delta_{X_R}$ on $X_R$ such that $(X_R,\Delta_{X_R})$ is klt and such that $-\left( K_{X_R}+\Delta_{X_R}\right)$ is ample, by \cite[Lemma 2.8]{PS09}. In particular $X_R$ has rational singularities, by \cite[Theorem 5.22]{KM98}, and $-K_{X_R}$ is a big divisor, and therefore ample. Finally, let us remark that since $(X_R,\Delta_{X_R})$ is klt and $\Delta_{X_R}$ is an effective $\mathbb{Q}$-divisor then it follows from \cite[Corollary 2.35]{KM98} that $(X_R,0)$ is klt as well. 

Let us suppose now that $X$ satisfies $(\dag)$ and that $\varphi_R:X\to X_R$ is a birational contraction. Then, it follows from \cite[Theorem 2.2]{DN14} that $\varphi_R$ is a divisorial contraction given by the blow-up in codimension two of an irreducible subvariety of dimension $(n-2)$ on $X_R$, and that $X_R$ is a $\mathbb{Q}$-factorial Fano variety with canonical singularities with at most finitely many non-terminal points. Finally, the ampleness of the anti-canonical divisor $-K_{X_R}$ follows verbatim from the proof in the smooth case given in \cite[Lemma 3.1]{CD12}.
\end{proof}

\begin{rema}\label{rholeq3}
Let $X$ be a Fano variety satisfying $(\dag)$ and let $D\subseteq X$ be an effective prime divisor such that $\dim_\mathbb{R}\N_1(D,X)=1$. Then $\rho_X\leq 3$, by \cite[Remark 5.5]{DN14}.
\end{rema}

\begin{rema}[Generalized conic bundles]\label{fibersP1}
 Following G. Della Noce \cite[p. 984]{DN14}, we say that a morphism $\varphi:X\to Y$ between $\mathbb{Q}$-factorial normal projective varieties is a {\it generalized conic bundle} if all its fibers are one-dimensional, the general fiber $F_g$ is isomorphic to $\mathbb{P}^1$ with anti-canonical degree $-K_X\cdot [F_g] = 2$ and if $F$ is an arbitrary fiber of $\varphi$ then either:
 \begin{enumerate}
\item $F$ is an irreducible and generically reduced rational curve such that $F_{\redu} \cong \mathbb{P}^1$ and that $-K_X\cdot [F]=2$.

\item $[F]=2[C]$ as 1-cycles, where $C$ is an irreducible and generically reduced rational curve such that $C_{\redu} \cong \mathbb{P}^1$ and that $-K_X\cdot [C] = 1$.

\item $F=C\cup C'$, with $C\neq C'$ irreducible and generically reduced rational curves such that $C_{\redu}\cong C'_{\redu}\cong \mathbb{P}^1$ and that $-K_X\cdot [C] = -K_X\cdot [C'] = 1$.
\end{enumerate}
 Moreover, it follows from \cite[Theorem II.2.8]{Kol96} that if all the fibers are of type (1) then $\varphi$ is in fact a $\mathbb{P}^1$-bundle. The main difference with the classical conic bundle case is that $\varphi$ might not be flat. As observed in \cite[p. 984]{DN14} if $X$ is a Fano variety satisfying $(\dag)$ and 
 $$X=X_0 \overset{\sigma_0}{\rightarrow} X_1 \rightarrow \cdots \rightarrow X_{k-1} \overset{\sigma_{k-1}}{\rightarrow} X_k \overset{\pi}{\rightarrow} Y$$
 is a sequence of $k\geq 0$ elementary divisorial contractions $\sigma_i:X_i\to X_{i+1}$ sending its corresponding exceptional divisor onto a subvariety of codimension two (cf. \cite[Definition 2.10]{DN14}) followed by a contraction of fiber type $\pi:X_k\to Y$ all whose fibers are one-dimensional, then $\pi$ is a generalized conic bundle in the above sense and, moreover, for every $i=0,\ldots,k-1$, the composition 
 $$\varphi_i:X_i \overset{\sigma_i}{\rightarrow} X_{i+1} \rightarrow \cdots \rightarrow X_{k-1} \overset{\sigma_{k-1}}{\rightarrow} X_k \overset{\pi}{\rightarrow} Y$$
 is a generalized conic bundle. We will be mainly interested in the cases $k=0,1$.
\end{rema}

Let us finish the section with two results concerning birational extremal contractions.

\begin{theo}[{\cite[Theorem 3.1]{DN14}}]\label{specialMoriprogram} Let $X$ be a $\mathbb{Q}$-factorial Fano variety with canonical singularities. Then, for any prime divisor $D\subseteq X$, there exists a finite sequence (called a special Mori program for the divisor $-D$)
$$X=X_0 \overset{\sigma_0}{\dashrightarrow} X_1 \dashrightarrow \cdots \dashrightarrow X_{k-1} \overset{\sigma_{k-1}}{\dashrightarrow} X_k \overset{\pi}{\rightarrow} Y  $$
such that, if $D_i\subseteq X_i$ is the transform of $D$ for $i=1,\ldots,k$ and $D_0:=D$, the following hold:
\begin{enumerate}
\item $X_1,\ldots, X_k$ and $Y$ are $\mathbb{Q}$-factorial projective varieties and $X_1,\ldots,X_k$ have canonical singularities.
\item for every $i=0,\ldots, k$, there exists an extremal ray $Q_i$ of $X_i$ with $D_i\cdot Q_i>0$ and $-K_{X_i}\cdot Q_i > 0$ such that:
\begin{itemize}
\item[(a)] for $i=1,\ldots,k-1$, $\Locus(Q_i)\subsetneq X_i$, and $\sigma_i$ is either the contraction of $Q_i$ (if $Q_i$ is divisorial), or its flip (if $Q_i$ is small);
\item[(b)] the morphism $\pi :X_k \to Y$ is the contraction of $Q_k$ and $\pi$ is a fiber type contraction.
\end{itemize}
\end{enumerate}
\end{theo}

The following result is a particular case of \cite[Lemma 3.3]{DN14}. We include the statement with our notation for completeness.

\begin{lemm} \label{Gorlocus}
Let $X_0$ be a Fano variety satisfying $(\dag)$. Let $D_0\subseteq X_0$ be an effective prime divisor and let us suppose that there is a diagram
$$X_0 \overset{\sigma_0}{\dashrightarrow} X_1 \overset{\sigma_1}{\dashrightarrow} X_2, $$
where $\sigma_i$ is the birational map associated to the contraction of an extremal ray $Q_i\subseteq \overline{\NE}(X_i)$ such that $D_i\cdot Q_i>0$, for $i=0,1$; as in Theorem \ref{specialMoriprogram}. If $Q_i \not \subseteq \N_1(D_i,X_i)$ for $i=0,1$, then both $\sigma_0$ and $\sigma_1$ are divisorial contractions, $\Exc(\sigma_i)$ is contained in the Gorenstein locus of $X_i$ and $\Exc(\sigma_0)$ is disjoint from the transform of $\Exc(\sigma_1)$ in $X_0$.  
\end{lemm}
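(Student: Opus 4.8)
The plan is to recognize that the data $X_0 \overset{\sigma_0}{\dashrightarrow} X_1 \overset{\sigma_1}{\dashrightarrow} X_2$, together with the hypotheses $D_i\cdot Q_i>0$ and $Q_i\not\subseteq \N_1(D_i,X_i)$, place us exactly in the setting of \cite[Lemma 3.3]{DN14}, and to reproduce its mechanism from the results already at hand. The engine is \cite[Theorem 2.2]{DN14}: on a variety satisfying $(\dag)$ there is no small $K$-negative extremal contraction all of whose fibers are at most one-dimensional, and any such \emph{birational} contraction with one-dimensional fibers is the blow-up in codimension two of an $(n-2)$-dimensional center. So the crux is to check that both $\cont_{Q_0}$ and $\cont_{Q_1}$ have one-dimensional fibers and to run this obstruction at each step.

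First I would establish the fiber-dimension bound, generalizing the computation behind Lemma \ref{fibersdim1} but using only the weaker hypothesis available here. Fix $i\in\{0,1\}$. If some positive-dimensional fiber $F$ of $\cont_{Q_i}$ were contained in $D_i$, it would contain a curve $C$ with $[C]\in Q_i$ (since $C$ is contracted) and $[C]\in \N_1(D_i,X_i)$ (since $C\subseteq D_i$), forcing $Q_i\subseteq \N_1(D_i,X_i)$, contrary to assumption; hence $\cont_{Q_i}$ is finite on $D_i$. Now every irreducible curve in a fiber $F$ has class in $Q_i$, and $D_i\cdot Q_i>0$, so by Kleiman's criterion $D_i|_F$ is ample; if $\dim F\geq 2$ this would make $F\cap D_i$ an effective divisor of dimension $\dim F-1\geq 1$ on $F$, contradicting the finiteness of $F\cap D_i$. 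Thus all fibers of $\cont_{Q_i}$ are at most one-dimensional.

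Next I would rule out flips. By Theorem \ref{specialMoriprogram} we have $-K_{X_i}\cdot Q_i>0$, so each $\cont_{Q_i}$ is a $K$-negative extremal contraction with one-dimensional fibers. For $i=0$ the variety $X_0$ satisfies $(\dag)$, so \cite[Theorem 2.2]{DN14} forces $\cont_{Q_0}$ to be divisorial, equal to the blow-up in codimension two of an $(n-2)$-dimensional center, and yields that $X_1$ is a $\mathbb{Q}$-factorial Fano variety with canonical singularities, at most finitely many non-terminal points, and a closed set $S\subseteq X_1$ with $\codim_{X_1}S\geq 3$ and $X_1\smallsetminus S\subseteq X_{1,\lisse}$ (cf. Proposition \ref{extcont}(2)). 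In particular $\Sing(X_1)$, and a fortiori the non-Gorenstein locus of $X_1$, lies in $S$ and has codimension $\geq 3$. This Gorenstein-in-codimension-two property, together with the finiteness of the non-terminal locus, is exactly the input that lets the argument of \cite[Theorem 2.2]{DN14} be repeated on $X_1$, so $\sigma_1$ is divisorial as well.

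Finally I would treat the Gorenstein and disjointness assertions. Since $X_0$ is Gorenstein by $(\dag)$, $\Exc(\sigma_0)$ lies in its Gorenstein locus trivially; for $i=1$ one checks, using the local structure of the codimension-two blow-up and the $K$-negativity of $\cont_{Q_1}$, that $\Exc(\sigma_1)$ avoids the codimension-$\geq 3$ non-Gorenstein locus of $X_1$. For disjointness, set $E_0=\Exc(\sigma_0)$, $E_1=\Exc(\sigma_1)$, and let $\widetilde{E_1}$ be the transform of $E_1$ in $X_0$; intersecting $\sigma_0^{*}E_1=\widetilde{E_1}+m E_0$ with a fiber $F$ of $\sigma_0|_{E_0}$ and using $E_0\cdot[F]=-1$ (Proposition \ref{extcont}(2)) gives $m=\widetilde{E_1}\cdot[F]\geq 0$; showing $m=0$ means the center $\sigma_0(E_0)$ is not contained in $E_1$, and combined with $E_1$ lying in the Gorenstein locus of $X_1$ this yields $E_0\cap\widetilde{E_1}=\emptyset$. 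The main obstacle is the second step: because a codimension-two blow-down of a Gorenstein variety can create non-Gorenstein points, $X_1$ need not satisfy $(\dag)$, so one cannot simply quote it and must instead exploit that $X_1$ is Gorenstein in codimension two with finitely many non-terminal points; the disjointness, which hinges on $m=0$ and on the mutual position of the two centers, is the other delicate point. Both are precisely the content of \cite[Lemma 3.3]{DN14}, of which the present statement is the two-step specialization.
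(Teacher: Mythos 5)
Your proposal matches the paper, which gives no independent proof of this lemma: it is stated verbatim as a particular case of \cite[Lemma 3.3]{DN14}, exactly the reduction you arrive at. The portions you do reconstruct (the one-dimensional-fiber bound from $Q_i \not\subseteq \N_1(D_i,X_i)$ and the exclusion of a small contraction on $X_0$ via \cite[Theorem 2.2]{DN14}) are sound, and you correctly identify the two genuinely delicate points --- handling the possibly non-Gorenstein locus of $X_1$ and the disjointness of the exceptional divisors --- as the content carried by that citation.
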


\section{The extremal case $\rho_X=3$}\label{section:rho=3}
In this section we study the extremal contractions of mildly singular Fano varieties $X$ on which there is an effective prime divisor of Picard number one and such that $\rho_X=3$. As we pointed out in Remark \ref{rholeq3}, this is the largest possible Picard number for such varieties. Compare with the smooth case \cite[Lemma 3.1, Theorem 3.8]{CD12}.

\begin{proof}[Proof of Theorem \ref{thm:rho=3}]

Since $X$ is a $\mathbb{Q}$-factorial log terminal Fano variety, there is an extremal ray $R\subseteq \overline{\NE}(X)$ such that $D\cdot R>0$, by Lemma \ref{Rexists}. We denote by $\sigma:=\varphi_R:X\to Y$ the associated extremal contraction. We note that Proposition \ref{extcont} implies that $\sigma$ is a divisorial contraction sending the effective prime divisor $E=\Exc(\sigma)$ onto a subvariety $A\subseteq Y$ of dimension $(n-2)$. Moreover, $Y$ is a Fano variety with canonical singularities and with at most finitely many non-terminal points, such that if we denote by $D_Y$ the image of $D$ by $\sigma$, then we have that $A\subseteq D_Y$ and that $\dim_\mathbb{R} \N_1(D_Y,Y)=1$.

Since $Y$ is $\mathbb{Q}$-factorial log terminal Fano variety, there is an extremal ray $Q\subseteq \overline{\NE}(Y)$ such that $D_Y\cdot Q>0$, by Lemma \ref{Rexists}. We denote by $\pi:=\varphi_Q:Y\to Z$ the associated extremal contraction. Let us prove that $\pi$ is of fiber type. Assume, to the contrary, that $\pi$ is a birational contraction. Hence, Lemma \ref{Gorlocus} implies that both $\sigma$ and $\pi$ are divisorial contractions, and that the exceptional locus $\Exc(\pi\circ \sigma)$ consists of two disjoint effective prime divisors. Since $D_Y\cdot Q > 0$ we have that $\Exc(\pi)\cdot [C]>0$ for every irreducible curve $C\subseteq D_Y$. In particular, $\Exc(\pi)\cap A \not= \emptyset$, as $\dim_\mathbb{C}A = n-2 \geq 1$, contradicting the fact that the exceptional divisors are disjoint.

Let $\widehat{R}\subseteq \overline{\NE}(X)$ be the extremal ray such that $\cont_{R+\widehat{R}}=\varphi:=\pi\circ \sigma$. Then, $\varphi$ can be factorized as $\varphi=\widehat{\pi}\circ\widehat{\sigma}$, where $\widehat{\sigma}:=\cont_{\widehat{R}}:X\to \widehat{Y}$ and $\widehat{\pi}:=\cont_{\widehat{\pi}(R)}:\widehat{Y}\to Z$. Since $\varphi$ has fibers of dimension 1, both contractions must have fibers of dimension at most 1. Notice that the general fiber of $\varphi$ is not contracted by $\widehat{\sigma}$, hence $\widehat{\sigma}$ must be divisorial and $\widehat{\pi}$ a contraction of fiber type, by the same arguments as above. 
\end{proof}

The results of S. Cutkosky on the contractions of terminal Gorenstein threefolds and Theorem \ref{thm:rho=3} above lead to Corollary \ref{thm:threefold}.

\begin{proof}[Proof of Corollary \ref{thm:threefold}]

By Theorem \ref{thm:rho=3}, there is a diagram
$$\xymatrix{X \ar[r]^{\sigma} \ar@/_1pc/[rr]_{\varphi} & Y \ar[r]^{\pi} & Z}, $$
where $\sigma:X\to Y$ is a divisorial contraction sending a prime divisor $E=\Exc(\sigma)\subseteq X$ onto a curve $C \subseteq Y$, and $\pi$ and $\varphi$ are both extremal contractions of fiber type whose fibers are of dimension 1. All these varieties are $\mathbb{Q}$-factorial Fano varieties, and $Y$ has terminal singularities. Moreover, $X$ is factorial by \cite[Lemma 2]{Cu88}.

By \cite[Lemma 3, Theorem 4]{Cu88}, $C\subseteq Y$ is an irreducible reduced curve which is a locally complete intersection, $Y$ is a factorial threefold which is smooth near the curve $C$, and $\sigma:X\to Y$ is the blow-up of the ideal sheaf $\mathcal{I}_C$. In particular, $Y$ is a Gorenstein Fano threefold with terminal singularities and therefore \cite[Theorem 7]{Cu88} implies that $Z$ is a smooth surface and $\pi:Y\to Z$ is a (possibly singular) conic bundle over $Z$. We note that $Z$ is a rationally connected (and therefore rational) surface with $\rho_Z=1$, hence $Z\cong \mathbb{P}^2$.

Let $H=\sigma(C) \subseteq Z$. Since $\pi$ is finite on $D_Y=\sigma(D)$ and $C\subseteq D_Y$, we have that $H$ is an effective prime divisor on $\mathbb{P}^2$, which is therefore ample. Let us denote by $S_\pi$ the locus of points of $\mathbb{P}^2$ over which $\pi$ is not a smooth morphism. By \cite[Proposition II.1.1]{Gr71}, is a closed subset of $\mathbb{P}^2$. Then, $S_\pi$ has pure codimension 1 on $\mathbb{P}^2$ or $S_\pi=\emptyset$, by \cite[Theorem 3]{ArRM}.

Let us suppose that $S_\pi$ is not empty. If we take $z\in H\cap S_\pi$ and we denote by $F_z\subseteq Y$ its fiber by $\pi$, then we have that $F_z\cap C\neq \emptyset$ (as $z\in H$) and that the 1-cycle on $Y$ associated to $F_z$ is of the form $[F_z]=[C]+[C']$, where $C$ and $C'$ are (possibly equal) irreducible and generically reduced rational curves such that $C_{\redu}\cong C'_{\redu} \cong \mathbb{P}^1$ (as $z\in S_\pi$), by Remark \ref{fibersP1}. Thus, if we denote by $\tilde{F}_z\subseteq X$ the total transform of $F_z$ on $X$ by $\sigma$, we will have $-K_X\cdot [\tilde{F}_z]\geq 3$, contradicting the fact that the anti-canonical degree of every fiber of $\varphi=\pi\circ \sigma$ is 2 (see Remark \ref{fibersP1}). We conclude in this way that $\pi:Y\to \mathbb{P}^2$ is a $\mathbb{P}^1$-bundle, and then $Y$ is an smooth threefold by \cite[Theorem 5]{ArRM}. 

Finally, let us notice that if $D_Y$ is not nef then there is a birational contraction $Y\to Y_0$ sending $D_Y$ to a point, by \cite[Remark 3.2]{CD12}, and hence \cite[Lemma 3.9]{CD12} implies that $Y\cong \mathbb{P}(\mathcal{O}_{\mathbb{P}^2}\oplus \mathcal{O}_{\mathbb{P}^2}(a))$, with $0 \leq a \leq 2$ because $Y$ is Fano. On the other hand, if $D_Y$ is nef then we apply \cite[Proposition 3.3]{CD12} to conclude that either there is a divisorial contraction $Y\to Y_0$ sending an effective prime divisor $G_Y\subseteq Y$ to a point, or there is a contraction of fiber type $Y\to \mathbb{P}^1$; small contractions are excluded since $Y$ is a smooth Fano threefold (see for instance \cite{Cu88} or \cite[Theorem 1.32]{KM98}). In the first case \cite[Lemma 3.9]{CD12} allows us to conclude, while in the second case we have that $Y\cong \mathbb{P}^2\times \mathbb{P}^1$, by \cite[Lemma 4.9]{Cas09}.
\end{proof}

\section{The case $\rho_X=2$}\label{section:rho=2}

In the case $\rho_X=2$ we can describe the extremal contraction associated to the other extremal ray in the Mori cone of $X$ (compare with \cite[Remark 3.2, Proposition 3.3]{CD12}). We will need the following result of G. V. Ravindra and V. Srinivas (see \cite{RS06}).

\begin{theo} \label{lefs}
Let $X$ be a complex normal projective variety and let $\mathcal{L}$ be an ample and globally generated line bundle over $X$. Then there is a dense Zariski open set of divisors $E\in |\mathcal{L}|$ such that the restriction map 
$$\operatorname{Cl}(X) \rightarrow \operatorname{Cl}(E) $$
is an isomorphism, if $\dim_\mathbb{C} X \geq 4$, and is injective, with finitely generated cokernel, if $\dim_\mathbb{C} X=3$.
\end{theo}

\begin{coro} \label{coro1}\label{D not ample}
Let $X$ be a $\mathbb{Q}$-factorial Fano variety of dimension $n\geq 3$ with log terminal singularities. If $\rho_X>1$ and $D$ is an effective prime divisor such that $\dim_\mathbb{R}\N_1(D,X)=1$, then $D$ is not an ample divisor.
\end{coro}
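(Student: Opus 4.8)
The plan is to argue by contradiction: suppose that $D$ is ample and deduce $\rho_X=1$. Since ampleness passes to multiples while the support of $D$ is unchanged, I fix the prime divisor $D$ and work with $\mathcal{L}=\mathcal{O}_X(mD)$ for $m\gg0$, so that $\mathcal{L}$ is very ample and globally generated. The linear system $|mD|$ then contains, besides its general (reduced, irreducible) members $E$, the special member $mD=m\cdot D$ whose support is exactly $D$. The argument has two independent ingredients: control of a general member $E\in|mD|$ via Theorem \ref{lefs}, and a specialization of $E$ onto $D$.

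For the general member I apply Theorem \ref{lefs} to $\mathcal{L}$: for general $E\in|mD|$ the restriction $\operatorname{Cl}(X)\to\operatorname{Cl}(E)$ is injective, and an isomorphism if $n\geq4$. I upgrade this to a numerical statement. Since $X$ is $\mathbb{Q}$-factorial log terminal Fano, $\operatorname{Cl}(X)_{\mathbb{Q}}=\Pic(X)_{\mathbb{Q}}=\N^1(X)_{\mathbb{Q}}$ has rank $\rho_X$. From the structure sequence $0\to\mathcal{O}_X(-mD)\to\mathcal{O}_X\to\mathcal{O}_E\to0$, together with $\h^i(X,\mathcal{O}_X)=0$ for $i>0$ and Kawamata--Viehweg vanishing applied to $\omega_X\otimes\mathcal{O}_X(mD)$ (ample for $m\gg0$ as $D$ is ample), one gets $\h^1(E,\mathcal{O}_E)=0$; hence $\operatorname{Pic}^0(E)=0$ and on $E$ numerical triviality is equivalent to torsion. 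Consequently, if $\alpha\in\N^1(X)$ restricts to a numerically trivial class on $E$, then a multiple of a lift of $\alpha$ vanishes in $\operatorname{Cl}(E)$, so by injectivity it is torsion in $\operatorname{Cl}(X)$ and $\alpha\equiv0$. Thus $\N^1(X)\to\N^1(E)$ is injective, equivalently $\N_1(E,X)=\N_1(X)$, so $\dim_{\mathbb{R}}\N_1(E,X)=\rho_X$.

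It remains to transfer this conclusion from the general member to the given divisor. Here I degenerate $E$, inside $|mD|$, to the special member $mD$, and I use that the relative Chow variety of the projective variety $X$ is proper: an algebraic family of effective $1$-cycles lying on the members $E_t$ specializes to an effective $1$-cycle supported on the central fibre, with unchanged class in $\N_1(X)$. Since $\operatorname{Supp}(mD)=D$, every effective curve class that can be made to vary in a family over the base therefore lands in $\N_1(D,X)$; choosing effective generators of $\N_1(E,X)$ in this way yields $\N_1(E,X)\subseteq\N_1(D,X)$. Combining with the previous paragraph gives $\rho_X=\dim_{\mathbb{R}}\N_1(E,X)\leq\dim_{\mathbb{R}}\N_1(D,X)=1$, contradicting $\rho_X>1$.

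The main obstacle is precisely this last transfer: a priori an effective class carried by a single general member need not move in a family over the whole base, so to specialize the spanning classes of $\N_1(E,X)$ onto $D$ one must know that $\N_1(E_t,X)$ is constant as $t$ ranges over a dense locus of $|mD|$. This is exactly where the uncountability of the base field is used (see the footnote): for each effective class the locus of members failing to realize it is a proper closed subset of $|mD|$, and these are countably many, so over an uncountable field a very general member avoids all of them. This pins down the generic value of $\N_1(E_t,X)$ and guarantees that the relevant families of $1$-cycles exist over a dense subset, allowing their specialization onto $D$.
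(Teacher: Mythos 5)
Your argument is correct and uses essentially the same ingredients, in the same way, as the paper's proof: the Ravindra--Srinivas restriction theorem, the vanishing $h^1(E,\mathcal{O}_E)=0$ (via the structure sequence and Kawamata--Viehweg) to identify numerically trivial classes on a general member $E\in|mD|$ with torsion classes, and the specialization of curves on a very general member onto the special member $mD$ (supported on $D$) via the countably many components of the relative Hilbert/Chow scheme over $|mD|$ and the uncountability of the base field. The only difference is the order of deductions --- the paper first proves $\dim_\mathbb{R}\N_1(E,X)=1$ by specialization and then derives a contradiction from the injectivity of $\operatorname{Cl}(X)\to\operatorname{Cl}(E)$, while you first deduce $\N_1(E,X)=\N_1(X)$ and then specialize --- which is a purely cosmetic reordering.
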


\begin{proof}

Assume, to the contrary, that $D$ is an ample divisor. Let $m\in \mathbb{Z}_{>0}$ such that $mD$ is a very ample Cartier divisor on $X$ and use the complete linear system $|mD|$ to embed $X\hookrightarrow \mathbb{P}(\h^0(X,mD))=\mathbb{P}^N$. Let us define the projective incidence variety
$$\mathfrak{D}=\{(x,[E])\in X\times |mD|\;|\;x\in E\} \subseteq X \times |mD|= X \times (\mathbb{P}^N)^*, $$
and let $\pi:\mathfrak{D}\to (\mathbb{P}^N)^*$ be the second projection.

Let $\mathcal{H}$ be the relative Hilbert scheme of curves associated to the morphism $\pi:\mathfrak{D}\to (\mathbb{P}^N)^*$, which is a projective scheme with countably many irreducible components. Let us denote by $Z_i\subseteq (\mathbb{P}^N)^*$ the image of the components of $\mathcal{H}$ that do not dominate $(\mathbb{P}^N)^*$. They are closed subsets of $(\mathbb{P}^N)^*$.

Thus, if we take $[E]\in (\mathbb{P}^N)^*-\cup Z_i$ (a very general point on $(\mathbb{P}^N)^*$), then for every curve $C_E$ on $E$ there is a dominant component of $\mathcal{H}$ such that $C_E$ is one the curves parametrized by this component. Since the image of this dominant component is in fact the whole projective space $(\mathbb{P}^N)^*$, then we have that there is a curve $C_D$ on $D$ which is also parametrized for this component. In particular, $C_E$ and $C_D$ are numerically equivalent. But, since there is only one curve on $D$ up to numerical equivalence, we will have $\dim_\mathbb{R}\N_1(E,X)=1$.

On the other hand, Theorem \ref{lefs} implies that we can also suppose that this very general divisor $E\in |mD|$ is chosen in such a way the restriction 
$$\operatorname{Cl}(X) \rightarrow \operatorname{Cl}(E) $$
is injective. Since $\rho_X>1$ and $\dim_\mathbb{R}\N_1(E,X)=1$, we have that the inclusion $\N_1(E)\to \N_1(X)$ is not surjective. Therefore, the induced map on the dual spaces obtained by restriction $\N^1(X)\to \N^1(E)$ is not injective.

Since $X$ is a $\mathbb{Q}$-factorial log terminal Fano variety, we have that numerical and linear equivalence coincide, by \cite[Lemma 2.5]{AD14}. Hence, we have the following diagram
$$\xymatrix{
\operatorname{Cl}(X)\otimes_{\mathbb{Z}}\mathbb{R} \ar@{^{(}->}[d] & \operatorname{Pic}(X)\otimes_{\mathbb{Z}}\mathbb{R} \ar@{_{(}->}[l]_{\cong} \ar@{->>}[r]^{\hspace{3mm}\cong} \ar@{^{(}->}[d] & \N^1(X) \ar@{->>}[d] \\
\operatorname{Cl}(E)\otimes_{\mathbb{Z}}\mathbb{R}  & \operatorname{Pic}(E)\otimes_{\mathbb{Z}}\mathbb{R} \ar@{_{(}->}[l] \ar@{->>}[r] & \N^1(E)
}$$
Therefore, $\N^1(X)\to \N^1(E)$ is not injective if and only if $\operatorname{Pic}(E)\otimes_{\mathbb{Z}}\mathbb{R} \to \N^1(E)$ is not injective. 

Let us consider a line bundle $\mathcal{L}$ on $E$ such that $\mathcal{L}\equiv 0$. Notice that $E$ has log terminal singularities since it is a general member of the ample linear system $|mD|$, by \cite[Lemma 5.17]{KM98}. In particular, $E$ has rational singularities and thus if we consider a resolution of singularities $\varepsilon:\widetilde{E}\to E$, then the Leray spectral sequence leads to $h^i(\widetilde{E},\mathcal{O}_{\widetilde{E}})=h^i(E,\mathcal{O}_E)$ and $h^i(E,\mathcal{L})=h^i(\widetilde{E},\varepsilon^*\mathcal{L})$ for all $i\geq 0$. On the other hand, the short exact sequence of sheaves
$$ 0 \rightarrow \mathcal{O}_X(-E) \rightarrow \mathcal{O}_X \rightarrow \mathcal{O}_E \rightarrow 0 $$
and the vanishing $h^i(X,\mathcal{O}_X)=0$ for $i\geq 1$, give us $h^1(E,\mathcal{O}_E)=h^2(X,\mathcal{O}_X(-E))$. Since $X$ is a normal variety with Cohen-Macaulay singularities, Serre's duality implies
$$\h^2(X,\mathcal{O}_X(-E))\cong \h^2(X,\mathcal{O}_X(-mD))\cong \h^{n-2}(X,\mathcal{O}_X(K_X+mD))^\vee. $$
So, by taking $m$ large enough at the beginning if necessary, we can suppose that $h^1(E,\mathcal{O}_E)=0$ by the Kawamata-Viehweg vanishing theorem.

We get that $h^1(\widetilde{E},\mathcal{O}_{\widetilde{E}})=0$ and hence an inclusion $\operatorname{Pic}(\widetilde{E}) \hookrightarrow \h^2(\widetilde{E},\mathbb{Z})$. Clearly $\varepsilon^*\mathcal{L}\equiv 0$ and thus $\varepsilon^*\mathcal{L} \cong \mathcal{O}_{\widetilde{E}}$. By the projection formula, $\mathcal{L}\cong \mathcal{O}_E$, a contradiction.
\end{proof}

We end this section by proving the following result.

\begin{theo}\label{thm:rho=2,2nd contr}
Let $X$ be a $\mathbb{Q}$-factorial Gorenstein Fano variety of dimension ${n\geq 3}$ with canonical singularities and with at most finitely many non-terminal points. Assume that there exists an effective prime divisor $D\subseteq X$ such that ${\dim_\mathbb{R}\N_1(D,X)=1}$ and that $\rho_X=2$. There are two possibilities:
\begin{enumerate}
\item If $D$ is not nef, then there is an extremal contraction sending $D$ to a point.
\item If $D$ is nef, then $S=D^\perp\cap \NE(X)$ is an extremal ray. One of the following assertions must hold:
  \begin{itemize}
  \item[a)] $\cont_S$ is of fiber type onto $\mathbb{P}^1$, and $D$ is a fiber.
  \item[b)] $\cont_S$ is a divisorial contraction sending its exceptional divisor $G$ to a point, and such that $G\cap D=\emptyset$.
  \item[c)] $\cont_S$ is a small contraction and there is a flip $X \dashrightarrow X'$ and a contraction of fiber type $\psi: X'\to Y'$ such that the general fiber is isomorphic to $\mathbb{P}^1$, with anti-canonical degree 2. Moreover, $\psi$ is finite over the strict transform of $D$ in $X'$.
  \end{itemize}
\end{enumerate}
\end{theo} 

\begin{proof}
If $D$ is not nef, then there exists an extremal ray $R\subseteq \NE(X)$ such that $D\cdot R<0$, and therefore $\Exc(\cont_R)\subseteq D$. Since $\dim_\mathbb{R}\N_1(D,X)=1$ we must have that $\Exc(\cont_R)=D$ and that $\cont_R(D)$ is a point.

If $D$ is nef, then it is not ample by Corollary \ref{coro1}, and thus $S=D^\perp\cap \NE(X)$ is an extremal ray, as $\rho_X=2$. If we denote $G=\Locus(S)\subseteq X$, the proof follows almost verbatim the proof in the smooth case given in \cite[Proposition 3.3]{CD12}. For reader's convenience, let us briefly sketch the proof: 

If $S\subseteq \N_1(D,X)$ then $\cont_S$ sends $D$ to a point. On the other hand, $D\cdot S = 0$ and hence $D$ is the pullback of a divisor via $\cont_S$, from which we conclude that the target of $\cont_S$ is $\mathbb{P}^1$ and hence that we are in case (a). 

If $S\not\subseteq \N_1(D,X)$ we verify that $G\cap D = \emptyset$ and hence $\cont_S$ is birational. Moreover $\N_1(G,X)\subseteq D^\perp$ has dimension 1 and hence $\cont_S$ sends $G$ to points. If $\cont_S$ is a divisorial contraction then we are in case (b). 

If $\cont_S$ is a small contraction, we consider the associated flip $X\dashrightarrow X'$ and we denote by $D'$ the strict transform of $D$ in $X'$ and by $S'$ the corresponding small extremal ray on $X'$ with exceptional locus $G'\subseteq X'$ and satisfying $K_{X'}\cdot S' > 0$. Since $X$ is Fano and $G \cap D = \emptyset$ we have that $X'$ has an extremal ray $T\subseteq \overline{\NE}(X')$ such that $-K_{X'}\cdot T > 0$ and $D'\cdot T > 0$, from which we deduce that if we denote $\cont_T=:\psi:X'\to Y'$ then $T\not\subseteq \N_1(D',X')$ and hence $\psi$ is finite both on $D'$ and $G'$. In particular, every non trivial fiber of $\psi$ has dimension 1, since $D'\cdot T>0$. It only remains to prove that $\psi$ is of fiber type. Suppose, to the contrary, that $\psi$ is a birational contraction. If we suppose that $\Exc(\psi)\cap G \neq \emptyset$ and we consider $F_0$ to be an irreducible component of a fiber of $\psi$ that intersects $G'$ then it follows from the fact that $X$ is Fano Gorenstein that $-K_{X'}\cdot [F_0] > 1$, by \cite[Lemma 3.2]{DN14} (which is the singular version of \cite[Lemma 3.8]{Cas09}, used in step (3.3.5) of the proof of \cite[Proposition 3.3]{CD12}). On the other hand, we notice that $F_0 \not\subseteq G'$ since $\psi$ is finite on $G'$ and hence \cite[Lemma 1.1]{Ish91} can be applied (since $F_0$ contains a Gorenstein point of $X'$) in order to deduce that $-K_{X'}\cdot [F_0] \leq 1$, a contradiction. We obtain therefore that $\Exc(\psi)\cap G' = \emptyset$, so that $\Exc(\psi)$ is contained in the Gorenstein locus of $X'$ and therefore \cite[Theorem 2.2]{DN14} implies that $\Exc(\psi)$ is a divisor. But in this case we would have that $\Exc(\psi)\cdot S'=0$ and $\Exc(\psi)\cdot T < 0$ implying that $-\Exc(\psi)$ is nef, a contradiction. We conclude therefore that $\psi$ is of fiber type.
\end{proof}

\section{The toric MMP}\label{section:toricmmp}

We will analyze now the toric case. We may refer the reader to \cite{CLS} for the general theory of toric varieties and to \cite{Mat02} for details of the toric MMP. We will keep the same notation as \cite{CLS}.

Let $N\cong \mathbb{Z}^n$ be a lattice, $M=\operatorname{Hom}_\mathbb{Z}(N,\mathbb{Z})$ its dual lattice and let $N_\mathbb{R}$ (resp. $N_\mathbb{Q}$) and $M_\mathbb{R}$ (resp. $M_\mathbb{Q}$) be their real scalar (resp. rational scalar) extensions. Let us denote by $\langle \cdot,\cdot \rangle: M_\mathbb{R}\times N_\mathbb{R}\to \mathbb{R}$ the natural $\mathbb{R}-$bilinear pairing.

Let $\Delta_X\subseteq N_\mathbb{R}\cong \mathbb{R}^n$ be a fan. As we will see, most of the properties of our interest in the context of the MMP of the associated toric variety $X=X(\Delta_X)$, can be translated into combinatorial properties of the fan $\Delta_X$. 

Sometimes we will write $N_X$ instead of $N$ in order to emphasize the dependence of $X(\Delta_X)$ on the lattice where primitive generators of $\Delta_X$ belong.
 
Let $\Delta_X(k)$ be the set of $k$ dimensional cones in $\Delta_X$. In the same way, if $\sigma\in \Delta_X$ is a cone, we will denote by $\sigma(k)$ the set of its $k-$dimensional faces. Usually, we will not distinguish between $1-$dimensional cones $\rho\in \Delta_X(1)$ (or $1-$dimensional faces $\rho\in \sigma(1)$) and the primitive vector $u_\rho\in N$ generating them.

If $\sigma\in \Delta_X(k)$ we will denote by $U_\sigma$ the associated affine toric variety, and by $V(\sigma)\subseteq X(\Delta_X)$ the closed invariant subvariety of codimension $k$. In particular, each $\rho\in \Delta_X(1)$ corresponds to an invariant Weil divisor $V(\rho)$ on $X$ (also noted $V(u_\rho)$); such a cone is called a {\it ray}. Similarly, each cone of codimension 1 $\omega\in \Delta_X(n-1)$ corresponds to an invariant rational curve on $X$; such a cone is called a {\it wall}. 

All affine toric varieties associated to strongly convex rational polyhedral cones are normal (see \cite[Theorem 1.3.5]{CLS}). Thus, a toric variety $X$ associated to a fan $\Delta_X$ is also normal. Moreover, if we say that a cone $\sigma \in \Delta_X$ is smooth if and only if the associated affine toric variety $U_\sigma$ is smooth, then we have the following result.
\begin{prop}[{\cite[Proposition 11.1.2, Proposition 11.1.8]{CLS}}]\label{toricsing0} 
Let $X$ be the toric variety associated to the fan $\Delta_X$. Then,
$$\operatorname{Sing}(X)=\bigcup_{\sigma \text{ not smooth}}V(\sigma) $$
and
$$X_{\lisse}=\bigcup_{\sigma \text{ smooth}}U_\sigma. $$
Moreover, given a $d-$dimensional simplicial cone $\sigma\subseteq N_\mathbb{R}$ with generators $u_1,\ldots,u_d\in N$, let $N_\sigma = \operatorname{Span}(\sigma)\cap N$ and define the multiplicity of $\sigma$ by
$$ \mult(\sigma)=[N_\sigma:\mathbb{Z}u_1+\cdots+\mathbb{Z}u_d]. $$
Then,
\begin{enumerate}
\item $\sigma$ is smooth if and only if $\mult(\sigma)=1$.
\item Let $e_1,\ldots,e_d$ be a basis of $N_\sigma$ and write $u_i=\sum_{i=1}^d a_{ij}e_j$. Then,
$$\mult(\sigma)=\left|\det(a_{ij})\right|. $$
\item If $\tau \preceq \sigma$ is a face of $\sigma$, then
$$\mult(\sigma)=\mult(\tau)[N_\sigma:N_\tau+\mathbb{Z}u_1+\cdots+\mathbb{Z}u_d]. $$
In particular, $\mult(\tau)\leq \mult(\sigma)$ whenever $\tau \preceq \sigma$.
\end{enumerate}
\end{prop}

\begin{rema}\label{smoothincodimk}
Let $X$ be a $n-$dimensional toric variety associated to a simplicial fan $\Delta_X$, i.e., a fan whose cones are all simplicial, on which there is a cone $\sigma$ of full dimension $n$. If $X$ is smooth in codimension $k$, namely the closed invariant subset $\operatorname{Sing}(X)\subseteq X$ is such that $\codim_X \operatorname{Sing}(X) \geq k+1$, then we can choose a basis of $\mathbb{Z}^n\cong N$ in such a way the first $k$ generators of the cone $\sigma$ corresponds to the first $k$ elements of the canonical basis of $\mathbb{Z}^n$.
\end{rema}

In general, most of the interesting kind of singularities can also be characterized by looking at the (maximal) cones belonging to the fan.

\begin{theo}\label{toricsing1}
Let $\sigma$ be a strongly convex rational polyhedral cone and let $U_\sigma$ be the corresponding affine toric variety, then the following hold.
\begin{enumerate}
\item $U_\sigma$ is Cohen-Macaulay.
\item $U_\sigma$ is $\mathbb{Q}$-factorial if and only if $\sigma$ is simplicial.
\item $U_\sigma$ is $\mathbb{Q}$-Gorenstein if and only if there exists $m_\sigma\in M_\mathbb{Q}$ such that $\langle m_\sigma, u_\rho \rangle = 1$, for every ray $\rho\in \sigma(1)$. In this case, the Gorenstein index of $U_\sigma$ is the smallest positive integer $\ell\in \mathbb{Z}_{>0}$ such that $\ell m_\sigma \in M$.
\item If $U_\sigma$ is $\mathbb{Q}$-Gorenstein then $U_\sigma$ has log terminal singularities.
\item If $U_\sigma$ is $\mathbb{Q}$-Gorenstein then $U_\sigma$ has terminal singularities of Gorenstein index $\ell$ if and only if there exists $m_\sigma \in M$ such that
$$\langle m_\sigma, u_\rho \rangle = \ell \text{ for all }u_\rho\in \operatorname{Gen}(\sigma) \text{ and} $$
$$\langle m_\sigma, u_\rho \rangle > \ell \text{ for all }u_\rho\in \sigma\cap N \setminus (\{0\}\cup \operatorname{Gen}(\sigma)). $$
The element $m_\sigma$ is uniquely determined whenever $\sigma$ is of maximal dimension in the fan.
\item If $U_\sigma$ is $\mathbb{Q}$-Gorenstein then $U_\sigma$ has canonical singularities of Gorenstein index $\ell$ if and only if there exists $m_\sigma \in M$ such that
$$\langle m_\sigma, u_\rho \rangle = \ell \text{ for all }u_\rho\in \operatorname{Gen}(\sigma) \text{ and} $$
$$\langle m_\sigma, u_\rho \rangle \geq \ell \text{ for all }u_\rho\in \sigma\cap N \setminus (\{0\}\cup \operatorname{Gen}(\sigma)). $$
The element $m_\sigma$ is uniquely determined whenever $\sigma$ is of maximal dimension in the fan.
\item If $U_\sigma$ is Gorenstein then $U_\sigma$ has canonical singularities.
\end{enumerate}
Here, $\operatorname{Gen}(\sigma)$ denotes the set of primitive vectors $u_\rho$ generating all the rays $\rho=\{\lambda u_\rho\;|\;\lambda \geq 0\}\in \sigma(1)$
\end{theo}

\begin{proof} We may refer the reader to the survey \cite{Dai02} for proofs or references to proofs.
\end{proof}

\begin{rema}
By Theorem \ref{toricsing1} above, if $X$ is a $\mathbb{Q}$-factorial toric variety with canonical singularities then we have the decomposition
$$ \operatorname{Sing}(X)=\mathop{\bigcup_{\text{$\sigma$ canonical}}}_{\text{non-terminal}}  V(\sigma) \cup \mathop{\bigcup_{\sigma \text{ terminal}}}_{\text{non-smooth}} V(\sigma).$$
Therefore, if $X$ is a $\mathbb{Q}$-factorial toric variety with canonical singularities and with at most finitely many non-terminal points, then the (finite) set of canonical points is made up by some invariant points $V(\sigma)$, where $\sigma$ are of maximal dimension in the fan.
\end{rema}

If $X$ is a $\mathbb{Q}$-factorial complete toric variety of dimension $n$ then every extremal ray $R\subseteq \overline{\NE}(X)$ of $X$ corresponds to an invariant curve $C_\omega$ such that $R=\mathbb{R}_{\geq 0}[C_\omega]$ or, equivalently, to a wall $\omega\in \Delta_X(n-1)$. 

Let us suppose that $\omega=\operatorname{cone}(u_1,\ldots,u_{n-1})$, where $u_i$ are primitive vectors. Since $\Delta_X$ is a simplicial fan, $\omega$ separates two maximal cones $\sigma=\operatorname{cone}(u_1,\ldots,u_{n-1},u_n)$ and $\sigma'=\operatorname{cone}(u_1,\ldots,u_{n-1},u_{n+1})$, where $u_n$ and $u_{n+1}$ are primitive on rays on opposite sides of $\omega$. The $n+1$ vectors $u_1,\ldots,u_{n+1}$ are linearly dependent. Hence, they satisfy a so called {\it wall relation}:
$$b_n u_n + \sum_{i=1}^{n-1}b_iu_i+b_{n+1} u_{n+1}=0, $$ 
where $b_n,b_{n+1}\in \mathbb{Z}_{>0}$ and $b_i\in \mathbb{Z}$ for $i=1,\ldots,n-1$. By reordering if necessary, we can assume that
\begin{center}
$\begin{array}{ccc}
b_i<0 & \text{for} & 1\leq i \leq \alpha \\
b_i=0 & \text{for} & \alpha+1\leq i \leq \beta \\
b_i>0 & \text{for} & \beta+1\leq i \leq n+1.
\end{array}$ 
\end{center}
This wall relation and the signs of the coefficients involved allow us to describe the nature of the associated contraction. Mori precisely, one of the following cases occurs:
\begin{itemize}
 \item[(1)] Fiber type contraction: $\alpha=0$
 \item[(2)] Divisorial contraction: $\alpha=1$
 \item[(3)] Small contraction: $\alpha>1$.
\end{itemize}
In all case we have that $\dim \Locus(R) = n - \alpha$ and $\dim \cont_R(\Locus(R))=\beta$. We refer the reader to \cite[$\S 2$]{Re83} or \cite[$\S 14.2$]{Mat02} for details.\\

In general, if $\omega\in \Delta_X(n-1)$ is any wall of $\Delta_X$ (not necessarily corresponding to an extremal ray), we will also have a wall relation allowing us to compute the intersection number of the curve $C_\omega$ with every invariant divisor $V(\rho)$, $\rho\in \Delta_X(1)$. See \cite[Proposition 6.4.4]{CLS} for details.\\

In the setting of toric varieties, we will be interested in analyzing the extremal contractions appearing in \textsection \ref{section:extremalcontractions} in terms of the combinatorial given by \cite[Theorem 2.4, Corollary 2.5]{Re83}. Let us begin by the birational case.

\begin{lemm}\label{toricdivisorial}
Let $X=X(\Delta_X)$ be a $\mathbb{Q}$-factorial Gorenstein toric variety of dimension $n\geq 3$. Let $\varphi_R:X\to X_R$ be a divisorial contraction with exceptional divisor $\Exc(\varphi_R)=E$ such that
\begin{enumerate}
\item $A=\varphi_R(E)$ is an invariant subvariety of codimension two.
\item $E\cdot [F]=K_X\cdot [F]=-1$ for every non-trivial fiber $F$ of $\varphi_R$.
\end{enumerate}
Let us suppose that $\varphi_R:X\to X_R$ is defined by the contraction of the wall $\omega=\operatorname{cone}(u_1,\ldots,u_{n-1})$ that separates the maximal cones $\sigma=\operatorname{cone}(u_1,\ldots,u_{n-1},u_n)$ and $\sigma'=\operatorname{cone}(u_1,\ldots,u_{n-1},u_{n+1})$. Then, up to reordering if necessary, the wall relation satisfied by these cones is of the form
$$ u_n+u_{n+1}=u_1. $$
If moreover $X$ has isolated singularities, then $X_R$ has isolated Gorenstein singularities, $A$ is contained in the smooth locus of $X_R$, and $X \cong \operatorname{Bl}_A(X_R)$.
\end{lemm}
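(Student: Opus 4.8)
The plan is to exploit the wall relation together with the two numerical hypotheses to pin down the coefficients completely. We begin with the general wall relation
\[
b_n u_n + \sum_{i=1}^{n-1} b_i u_i + b_{n+1} u_{n+1} = 0,
\]
with $b_n, b_{n+1} \in \mathbb{Z}_{>0}$. Since $\varphi_R$ is divisorial, we are in the case $\alpha = 1$, so exactly one of the $b_i$ (for $1 \le i \le n-1$) is negative and the remaining are zero; after reordering we may take $b_1 < 0$ and $b_2 = \cdots = b_{n-1} = 0$. Moreover $\dim \Locus(R) = n - \alpha = n-1$ and $\dim \varphi_R(\Locus(R)) = \beta$; since $A = \varphi_R(E)$ has codimension two, i.e.\ dimension $n-2$, we get $\beta = n-2$, confirming that $b_2 = \cdots = b_{n-1} = 0$ is forced and that the only nonzero coefficients are $b_n, b_{n+1}$ and $b_1$.

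\textbf{Normalizing the coefficients using the intersection hypothesis.}
The intersection numbers $V(\rho) \cdot C_\omega$ are computed from the wall relation as in \cite[Proposition 6.4.4]{CLS}: writing the relation with the coefficient of $u_n$ normalized, one has $E \cdot [F] = V(u_1) \cdot C_\omega$ proportional to $b_1$, and similarly $K_X \cdot [F]$ is $-\sum_i b_i$ up to the same normalization (using that $X$ is Gorenstein, so $K_X = -\sum_\rho V(\rho)$ and $m_\sigma$ integral). The hypothesis $E \cdot [F] = K_X \cdot [F] = -1$ thus gives two equations. First I would normalize so that $b_n = 1$ (using $-K_X \cdot [F] = b_n + b_{n+1} + b_1$, which must equal $1$, and $E \cdot [F] = b_1/b_n = -1$). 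Solving these forces $b_1 = -b_n$, hence $b_1 = -1$ with $b_n = 1$, and then $b_n + b_1 + b_{n+1} = b_{n+1} = 1$. This yields exactly
\[
u_n + u_{n+1} = u_1,
\]
which is the asserted wall relation. The main care here is to correctly match the normalization conventions of \cite{Re83} so that the two degree-one conditions become the two scalar equations above rather than collapsing into one; this bookkeeping is where I expect the only real friction.

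\textbf{The isolated-singularities refinement.}
For the final claim, suppose $X$ has isolated singularities. The contraction $\varphi_R$ replaces the two maximal cones $\sigma, \sigma'$ by the single cone $\sigma_R = \operatorname{cone}(u_1,\ldots,u_{n-1},u_n,u_{n+1})$ whose relevant two-dimensional slice is governed by the relation $u_n + u_{n+1} = u_1$. I would first show $\sigma_R$ is smooth: since $u_n + u_{n+1} = u_1$, the vectors $u_2,\ldots,u_{n-1},u_n,u_{n+1}$ span the same lattice cone structure, and one checks via Proposition \ref{toricsing0}(2) that $\mult(\sigma_R) = 1$ using that the relation has unit coefficients, so $A = V(\operatorname{cone}(u_2,\ldots,u_{n-1},u_n,u_{n+1}))$ or the appropriate codimension-two face lies in the smooth locus $X_{R,\lisse}$. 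Because $X$ has only isolated singularities, the walls and subcones away from the finitely many singular points are smooth, so $X_R$ likewise has at worst isolated singularities, and the computation $-K_{X_R} \cdot$ (test element) $= 1$ via $m_{\sigma_R}$ shows $X_R$ is Gorenstein at the relevant cones. Finally, since $A$ sits in $X_{R,\lisse}$ and the fan for $X$ is obtained from that of $X_R$ by the star subdivision along $u_1 = u_n + u_{n+1}$ of a smooth cone, this star subdivision is precisely the blow-up of the smooth center $A$, giving $X \cong \operatorname{Bl}_A(X_R)$. The one point requiring attention is verifying that isolatedness of singularities is exactly what prevents the phenomenon of a mere codimension-two blow-up (non-globally-a-blow-up), as flagged before Proposition \ref{prop:1-covering,divisorial}; here the smoothness of $A$ in $X_R$ upgrades the codimension-two description to a genuine blow-up of the ideal sheaf.
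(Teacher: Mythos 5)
Your reduction to the wall relation $b_n u_n + b_1 u_1 + b_{n+1} u_{n+1} = 0$ with $b_1<0$ and $b_2=\cdots=b_{n-1}=0$ is correct, and your treatment of the isolated-singularities refinement is essentially the paper's (one uses $E\cdot[C_{\omega_i}]=-1$ to get $\mult(\omega_i)=\mult(\sigma_i)=\mult(\sigma_i')$, isolatedness to get $\mult(\omega_i)=1$, and then the star subdivision of a smooth cone is a genuine blow-up of the smooth center). The gap is in the middle step, exactly where you flagged ``friction''. The intersection numbers obtained from the wall relation are only \emph{proportional} to the coefficients: there is a single positive constant $c$ (a ratio of multiplicities, cf.\ \cite[Proposition 6.4.4]{CLS}) with $E\cdot[F]=c\,b_1$ and $-K_X\cdot[F]=c\,(b_1+b_n+b_{n+1})$. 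Your two formulas $E\cdot[F]=b_1/b_n$ and $-K_X\cdot[F]=b_1+b_n+b_{n+1}$ use incompatible normalizations, and one cannot ``normalize $b_n=1$'' while keeping the coefficients of an integral relation among primitive vectors. After eliminating $c$, the hypotheses $E\cdot[F]=K_X\cdot[F]=-1$ yield only the single relation $b_n+b_{n+1}=-2b_1$, which has infinitely many pairwise-coprime integer solutions besides $(1,1,-1)$, e.g.\ $(b_n,b_{n+1},b_1)=(3,1,-2)$; so the numerical conditions alone do not force $u_n+u_{n+1}=u_1$.

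The missing ingredient is the Gorenstein hypothesis used through the Cartier data, not merely through $K_X=-\sum_{\rho}V(\rho)$ (which holds for every toric variety). Since $K_X$ is Cartier there exist $m,m'\in M$ with $\langle m,u_i\rangle=1$ for $i\in\{1,\ldots,n\}$ and $\langle m',u_i\rangle=1$ for $i\in\{1,\ldots,n-1,n+1\}$. Pairing the wall relation with $m$ and with $m'$ gives $b_n+b_1+b_{n+1}\langle m,u_{n+1}\rangle=0$ and $b_n\langle m',u_n\rangle+b_1+b_{n+1}=0$; combined with $b_n+b_{n+1}=-2b_1$ these give $b_1=b_{n+1}(\langle m,u_{n+1}\rangle-1)$ and its analogue, hence $b_{n+1}\mid b_1$ and $b_n\mid b_1$. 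Together with the pairwise coprimality of the coefficients (which follows from the primitivity of $u_1$, $u_n$, $u_{n+1}$), this forces $b_n=b_{n+1}=1$ and $b_1=-1$. Without this step the asserted relation, and everything downstream of it, is unproved.
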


\begin{proof}
By \cite[Theorem 2.4, Corollary 2.5]{Re83}, up to reordering if necessary, we can suppose that $\varphi_R:X\to X_R$ is defined by the relation
\begin{equation}\alpha u_n + \lambda u_1 + \beta u_{n+1}=0, \end{equation}
where $\alpha,\beta \in \mathbb{Z}_{>0}$, $\lambda \in \mathbb{Z}_{<0}$ and $E=V(u_1)$. Let us denote by $C=V(\omega)$ the invariant curve associated to the wall $\omega$.

By \cite[Proposition 6.4.4]{CLS}, $V(u)\cdot [C] = 0$ for $u\not \in \{u_1,\ldots,u_{n+1}\}$. Moreover, the wall relation gives us $V(u_i)\cdot [C] = 0$ for $i\in \{2,\ldots,n-1\}$ and 
$$V(u_1)\cdot [C] = \dfrac{\lambda}{\alpha}V(u_n)\cdot [C] = \dfrac{\lambda}{\beta}V(u_{n+1})\cdot [C]. $$
By hypothesis, $V(u_1)\cdot [C] = -1$ and thus $V(u_n)\cdot [C] = -\frac{\alpha}{\lambda}$ and $V(u_{n+1})\cdot [C] = -\frac{\beta}{\lambda}$.

It is well know that for a toric variety $X=X(\Delta_X)$ we have that $K_X=-\sum_{\rho\in \Delta_X(1)}V(\rho)$ is an invariant canonical divisor on $X$ (see \cite[Theorem 8.2.3]{CLS}), and thus the condition $-K_X\cdot [C] = 1$ can be translated into
\begin{equation}-1+\left(-\dfrac{\alpha}{\lambda}\right)+\left(-\dfrac{\beta}{\lambda}\right)=1 \Leftrightarrow \alpha+\beta = -2\lambda. \end{equation}
On the other hand, we should notice that we can suppose that $\gcd(\lambda,\alpha)=\gcd(\lambda,\beta)=\gcd(\alpha,\beta)=1$. In fact, if $\gcd(\alpha,\beta)=d>1$, then the equation (1) implies $d|\lambda$, as $u_1$ is a primitive vector. The same argument applies to the other two pairs.

By assumption, $K_X=-\sum_{\rho\in\Delta_X(1)}V(\rho)$ is a Cartier divisor, i.e., 
for each maximal cone $\sigma\in \Delta_X(n)$, there is $m_\sigma\in M$ with $\langle m_\sigma,u_\rho\rangle =1 \text{ for all }\rho\in \sigma(1)$. In our setting, this condition applied to the two maximal cones $\sigma$ and $\sigma'$ tells us that there exist two elements $m,m'\in M$ such that
$$\langle m,u_i\rangle = 1 \qquad\text{for}\qquad i\in\{1,\ldots,n\}, $$
$$\langle m',u_i\rangle = 1 \qquad\text{for}\qquad i\in\{1,\ldots,n-1,n+1\}. $$
From the equation (1) we obtain 
\begin{equation}
\alpha+\lambda+\beta\langle m,u_{n+1}\rangle = 0, \text{ and} 
\end{equation}
\begin{equation}
\alpha \langle m',u_n \rangle + \lambda + \beta = 0
\end{equation}
By using the equation (2) and (3) we obtain that $\lambda = \beta (\langle m,u_{n+1}\rangle - 1)$ and thus $\beta=1$, as $\beta\in \mathbb{Z}_{>0}$ and $\gcd(\lambda,\beta)=1$. In the same way, by using the equation (2) and (4), we deduce that $\alpha=1$ and hence $\lambda=-1$. Finally, we get the relation
$$u_n + u_{n+1}=u_1,$$
and $\langle m,u_{n+1} \rangle = \langle m',u_n \rangle = 0$.

Let us suppose now that $X$ has isolated singularities. We note that there are exactly $n-1$ walls satisfying this relation (corresponding to the fibers over $n-1$ invariant points of $A)$, $\omega_i$ with $i=1,\ldots,n-1$, each of them separating two maximal cones $\sigma_i$ and $\sigma_i'$. It follows from the wall relation above and the intersection theory for toric varieties (see \cite[Proposition 6.4.4]{CLS}) that 
 $$E \cdot [C_{\omega_i}] = -\dfrac{\mult(\omega_i)}{\mult(\sigma_i)}=-\dfrac{\mult(\omega_i)}{\mult(\sigma_i')}=-1, $$
 and
 $$-K_X\cdot [C_{\omega_i}]=\dfrac{\mult(\omega_i)}{\mult(\sigma_i)}=\dfrac{\mult(\omega_i)}{\mult(\sigma_i')}=1. $$
Therefore, $\mult(\omega_i)=\mult(\sigma_i)=\mult(\sigma_i')$. 

On the other hand, Remark \ref{smoothincodimk} implies that $\mult(\omega_i)=1$ since $X$ has isolated singularities. Hence, both $\sigma_i$ and $\sigma_i'$ are smooth cones in $\Delta_X(n)$. We get that the associated maximal cone $\tau_i\in \Delta_{X_R}(n)$ obtained from $\sigma_i$ and $\sigma_i'$ by removing the ray corresponding to the exceptional divisor $E=\operatorname{Exc}(\varphi_R)$ is also smooth, for $i=1,\ldots,n-1$. Thus, $A=V(u_n,u_{n+1})$ is contained in the smooth locus of $X_R$. The isomorphism $X\cong \operatorname{Bl}_A(X_R)$ follows from the description of the blow-up of a smooth toric variety along an irreducible invariant smooth subvariety (see \cite[Definition 3.3.17]{CLS}). A posteriori, we note that $X_R$ also have isolated Gorenstein singularities since $A$ is contained in the smooth locus of $X_R$.
\end{proof}

Let us consider now the case when $\varphi_R:X\to X_R$ is a contraction of fiber type.

\begin{rema}[Flatness of toric fibrations] \label{rema: flatness}
Contrary to what was originally written in \cite[Corollary 2.5]{Re83}, the restriction of an extremal contraction to its exceptional locus may not be flat in general, as Miles Reid kindly communicated to the author. See \cite[Example 13]{ArRM} for a local counter-example that can be compactified by \cite{Fuj06}. We refer the interested reader to Kato's criterion of flatness for toric morphisms \cite[Proposition 4.1]{Kat89}.
\end{rema}

We will need the following result concerning extremal contractions of fiber type.

\begin{lemm} \label{toricfibration}
Let $X$ be a $\mathbb{Q}$-factorial Gorenstein toric variety of dimension $n\geq 3$ and let $\varphi_R:X\to X_R$ be a proper toric morphism given by the contraction of an extremal ray $R\subseteq \overline{\NE}(X)$ of fiber type. Suppose that all the fibers of $\varphi_R$ are of dimension 1. If $\varphi_R$ has a non-reduced fiber then $\Sing(X)$ has an irreducible component of codimension 2. In particular, if $X$ has at most finitely many non-terminal points then $\varphi_R$ is a $\mathbb{P}^1$-bundle and there exists a split vector bundle $E$ of rank $2$ on $X_R$ and an isomorphism $X\cong \mathbb{P}_{X_R}(E)$ over $X_R$.
\end{lemm}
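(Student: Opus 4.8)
The plan is to translate the statement into the combinatorics of the walls of $R$, using \cite[Theorem 2.4, Corollary 2.5]{Re83} and the toric intersection theory of \cite[Proposition 6.4.4]{CLS}. Every curve contracted by $\varphi_R$ has the form $C_\omega=V(\omega)$ for a wall $\omega=\operatorname{cone}(u_1,\dots,u_{n-1})$ separating two maximal cones $\sigma=\operatorname{cone}(u_1,\dots,u_{n-1},u_n)$ and $\sigma'=\operatorname{cone}(u_1,\dots,u_{n-1},u_{n+1})$. Since $\varphi_R$ is of fiber type with one-dimensional fibers we have $\dim X_R=n-1$, so in the sign convention for the wall relation one has $\alpha=0$ and $\beta=n-1$; hence $b_1=\dots=b_{n-1}=0$ and the relation of a contracted \emph{irreducible} fibre is $b_nu_n+b_{n+1}u_{n+1}=0$. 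Primitivity forces $u_{n+1}=-u_n$ and $b_n=b_{n+1}=1$; since the images of $u_n$ and $u_{n+1}=-u_n$ in $N_{X_R}$ both lie in the strongly convex cone $\overline{\sigma}$ spanned by the image of $\sigma$, they vanish, so $\mathbb{Z}u_n=\ker(N\to N_{X_R})$ is the fibre direction. In particular $\varphi_R$ is a generalized conic bundle (Remark \ref{fibersP1}, case $k=0$).

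To prove the first assertion I would detect non-reducedness through the anticanonical degree. From the relation $u_n+u_{n+1}=0$ and \cite[Proposition 6.4.4]{CLS} one has $V(u_n)\cdot C_\omega=\mult(\omega)/\mult(\sigma)$ and $V(u_{n+1})\cdot C_\omega=\mult(\omega)/\mult(\sigma')$, while all other invariant divisors meet $C_\omega$ trivially; since $\mult(\sigma)=\mult(\sigma')$ by Proposition \ref{toricsing0}(3), this gives $-K_X\cdot C_\omega=2\,\mult(\omega)/\mult(\sigma)$. As $X$ is Gorenstein this is a positive integer, and as the general fibre has $-K_X\cdot F_g=2$, a non-reduced fibre is exactly a fibre of type $(2)$ of Remark \ref{fibersP1}, forcing $-K_X\cdot C_\omega=1$, i.e.\ $\mult(\sigma)=\mult(\sigma')=2\,\mult(\omega)$. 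It remains to convert this multiplicity jump into a two-dimensional singular cone, and this is the step I expect to be the main obstacle. The natural approach is to use the Gorenstein condition at both $\sigma$ and $\sigma'$: setting $N_\omega=\operatorname{Span}(\omega)\cap N$ and choosing $m_\sigma,m_{\sigma'}\in M$ equal to $1$ on the rays of $\sigma,\sigma'$, the relation $u_{n+1}=-u_n$ shows that $m_\sigma-m_{\sigma'}$ generates $N_\omega^\perp\cap M$ and pairs to $2$ with $u_n$; writing the $u_i$ in the height-one coordinates afforded by $m_\sigma$ and expanding $u_n$ on a basis adapted to $\omega$, a parity computation on the $2\times2$ minors of the pairs $(u_i,u_n)$ should produce a ray $u_i$ with $\mult(\operatorname{cone}(u_i,u_n))$ even, hence a codimension-two component $V(\operatorname{cone}(u_i,u_n))$ of $\Sing(X)$. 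The delicate point is that the jump of $\mult(\sigma)$ need not localize to a $2$-face of $\sigma$ for every $u_n$ (``fat fibre'' configurations, in which $\Sing(X)$ stays of codimension $\geq 3$, are a priori possible once $n\geq 4$); excluding these should require combining the Gorenstein constraints at $\sigma$ and $\sigma'$ with the completeness of $\Delta_X$ and the equidimensionality of $\varphi_R$, and this is where I expect the real work to lie.

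For the final assertion, assume $X$ has at most finitely many non-terminal points. Then the non-terminal points are isolated and terminal singularities are smooth in codimension two, so $\codim_X\Sing(X)\geq 3$ by the Remark following Theorem \ref{toricsing1}, and $\Sing(X)$ has no codimension-two component. By the first part $\varphi_R$ has no non-reduced fibre; moreover a reducible fibre (type $(3)$) would contain a contracted rational curve of anticanonical degree $1$, to which the multiplicity analysis above applies, again producing a codimension-two singular component. Hence every fibre is an irreducible reduced $\mathbb{P}^1$ with $-K_X\cdot F=2$, i.e.\ of type $(1)$, and \cite[Theorem II.2.8]{Kol96} (as recalled in Remark \ref{fibersP1}) shows that $\varphi_R$ is a $\mathbb{P}^1$-bundle. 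Finally, reading off the fan, the two invariant fibrewise fixed points yield two disjoint invariant sections of $\varphi_R$, so the associated rank-two toric bundle splits as $E=\mathcal{O}_{X_R}\oplus\mathcal{L}$ and $X\cong\mathbb{P}_{X_R}(E)$ over $X_R$, which is the desired split vector bundle.
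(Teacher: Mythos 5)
There is a genuine gap, and you flag it yourself: the passage from ``$\varphi_R$ has a non-reduced fibre'' to ``$\Sing(X)$ has a codimension-two component'' is exactly the content of the lemma, and your proposal leaves it at the level of a heuristic (``a parity computation on the $2\times 2$ minors \emph{should} produce a ray $u_i$ with $\mult(\operatorname{cone}(u_i,u_n))$ even''), explicitly deferring the exclusion of configurations where the multiplicity jump does not localize to a $2$-face. There is also a second, unacknowledged problem upstream: you detect non-reducedness via $-K_X\cdot C_\omega=1$ by appealing to the fibre dichotomy of Remark \ref{fibersP1}, i.e.\ to the fact that every scheme-theoretic fibre has anticanonical degree $2$; but that remark assumes $X$ is Fano and satisfies $(\dag)$, neither of which is a hypothesis of this lemma ($X$ is only $\mathbb{Q}$-factorial Gorenstein toric), and since $\varphi_R$ need not be flat (Remark \ref{rema: flatness}) the numerical constancy of the fibre class is not automatic. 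So the equivalence ``non-reduced fibre $\Leftrightarrow \mult(\sigma)=2\mult(\omega)$'' on which your whole strategy rests is itself unproved in the generality required.

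The paper's proof closes precisely the step you could not, by replacing the anticanonical-degree criterion with a purely combinatorial one. Working locally over $U_\tau$ with Reid's two maximal cones $\sigma=\operatorname{cone}(u_1,\dots,u_n)$ and $\sigma'=\operatorname{cone}(u_0,\dots,u_{n-1})$, $u_0=-u_n$, it invokes a lemma of Karu \cite[Chapter 2, Lemma 5.2]{Karu99}: the fibre over $U_\tau$ is non-reduced if and only if some $\Phi(u_i)=\lambda_i t_i$ with $t_i$ primitive and $\lambda_i\geq 2$. The Gorenstein condition is then used at \emph{both} maximal cones: adding $\langle m_\sigma,u_i\rangle=1$ and $\langle m_{\sigma'},u_i\rangle=1$ eliminates the $u_n$-coordinate and gives $\lambda_i\langle m_\sigma+m_{\sigma'},t_i\rangle=2$, forcing $\lambda_i=2$; consequently $\overline{u}_i:=\tfrac{1}{2}(u_i+u_n)$ is a lattice point of the two-dimensional cone $\operatorname{cone}(u_i,u_n)$ lying at height one for $m_\sigma$ and distinct from the generators, so $X$ is canonical non-terminal -- in particular singular -- along the entire codimension-two stratum $V(u_i,u_n)$. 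This is the explicit localization to a $2$-face that your sketch is missing, and it is obtained from the Gorenstein data directly rather than from a multiplicity identity for $\mult(\sigma)/\mult(\omega)$. Your final paragraph (no non-reduced fibres, hence all fibres irreducible reduced, hence a $\mathbb{P}^1$-bundle by \cite[Theorem II.2.8]{Kol96}, with the two disjoint invariant sections producing the split rank-two bundle) agrees with the paper; note only that reducible fibres need not be excluded by a separate degree argument, since Reid's local description already gives that all fibres of a toric extremal contraction of fibre type are irreducible.
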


\begin{proof}
The statement is local on $X_R$ so we may suppose that $X_R=U_\tau$ is an affine simplicial toric variety. If follows from \cite[Theorem 2.4, Corollary 2.5]{Re83} that if $\varphi_R:X\to X_R$ is an extremal contraction of pure relative dimension one then all the fibers of $\varphi_R$ are irreducible, the fan of $X=\varphi_R^{-1}(U_\tau)$ has two maximal cones $\sigma=\operatorname{cone}(u_1,\ldots,u_n)$ and $\sigma'=\operatorname{cone}(u_0,\ldots,u_{n-1})$ such that $u_0=-u_n$, and $\varphi_R$ is induced by the projection ${\Phi:N_{X} \to N_{X_R} = N_{X}/(\operatorname{Span}(u_n)\cap N_X)}$ onto $\tau = \operatorname{cone}(t_1,\ldots,t_{n-1})$. Since $X$ is smooth in codimension one, we can always suppose that $u_n=e_n$ and hence that $\varphi_R$ is induced by the projection $\Phi(x_1,\ldots,x_n)=(x_1,\ldots,x_{n-1},0)$.
 
 It follows from \cite[Chapter 2, Lemma 5.2]{Karu99} that if $\varphi_R$ has a non-reduced fiber over $U_\tau$ then there exists $u_i$ such that $\Phi(u_i)$ is not primitive on $N_{X_R}$ for some $i=1,\ldots,n-1$. Let us suppose therefore that $\Phi(u_i)=\lambda_i t_i$ with $t_i \in \mathbb{Z}^{n-1}\times \{0 \}$ primitive lattice vector and $\lambda_i \geq 2$. Let us suppose that $u_i=(a_1,\ldots,a_n)\in \mathbb{Z}^{n}$ and hence that $\Phi(u_i)=(a_1,\ldots,a_{n-1},0)=\lambda_i(\overline{a}_1,\ldots,\overline{a}_{n-1},0)$ with $t_i=(\overline{a}_1,\ldots,\overline{a}_{n-1},0)$ primitive lattice vector.
 
 Since $X$ is Gorenstein there are integer (dual) vectors $m_\sigma=(m_1,\ldots,m_n)\in \mathbb{Z}^n$ and $m_\sigma'=(m_1',\ldots,m_n')\in \mathbb{Z}^n$ such that $\langle m_\sigma,u_i \rangle = 1$ for $i=1,\ldots,n$ and $\langle m_\sigma',u_i \rangle = 1$ for $i=0,\ldots,n-1$, by Theorem \ref{toricsing1}. In particular we have that $m_n=1$, $m_n'=-1$ and 
 \begin{equation}
  \langle m_\sigma, u_i \rangle = m_1a_1+\ldots+m_{n-1}a_{n-1}+a_n=1,
 \end{equation}
 \begin{equation}
  \langle m_\sigma', u_i \rangle = m_1'a_1+\ldots+m_{n-1}'a_{n-1}-a_n=1.
 \end{equation}
By adding equation (1) and (2) above, we get that $\lambda_i \langle m_\sigma+m_\sigma', t_i \rangle = 2$ and hence $\lambda_i = 2$. We can write therefore $u_i=(2\overline{a}_1,\ldots,2\overline{a}_{n-1},a_n)\in \mathbb{Z}^n$ with $a_n$ odd number and $t_i=(\overline{a}_1,\ldots,\overline{a}_{n-1},0)$.

Let us produce now a lattice vector $\overline{u}_i \in \mathbb{Z}^n$ lying over $t_i$ and belonging to the hyperplane $\{u\in \mathbb{R}^n\;|\; \langle m_\sigma,u\rangle = 1\}$. We have that
\begin{equation*}
 \begin{aligned}
 1 & = m_1a_1+\ldots+m_{n-1}a_{n-1}+a_n \\
   & = m_1 2\overline{a}_1 + \ldots + m_{n-1} 2 \overline{a}_{n-1} + a_n \\
   & = m_1 \overline{a}_1 + \ldots + m_{n-1} \overline{a}_{n-1} + b = \langle m_\sigma,\overline{u}_i \rangle,
 \end{aligned}
\end{equation*}
where $\overline{u}_i=(\overline{a}_1,\ldots,\overline{a}_{n-1},b)$ and $b=\overline{a}_1 + \ldots + m_{n-1} \overline{a}_{n-1}+a_n\in \mathbb{Z}$. We note that $2\overline{u}_i = u_i+u_n$ and thus $\overline{u}_i \in \operatorname{cone}(u_i,u_n)$. We conclude from Proposition \ref{toricsing0} and Theorem \ref{toricsing1} that $X$ has canonical singularities along the codimension two closed subvariety $V(u_i,u_n)$, which are therefore non terminal by codimension reasons.

Finally, if $X$ has at most finitely many non-terminal points we conclude therefore that all the fibers of $\varphi_R$ are irreducible and reduced and thus $\varphi_R$ is a $\mathbb{P}^1$-bundle. Moreover, the invariant divisors $V(u_0)$ and $V(u_n)$ correspond to two disjoint invariant sections $s_0:X_R\to X$ and $s_\infty:X_R\to X$ passing through the two invariant points of all the fibers of $\varphi$. Hence, \cite[Remark 8]{ArRM} implies that there exists a rank 2 split vector bundle $E$ such that $X\cong \mathbb{P}_{X_R}(E)$. 
\end{proof}

\section{The extremal case $\rho_X=3$ for toric varieties}\label{section:rho=3toric}

We are now able to prove the structure theorem for toric varieties with Picard number 3.

\begin{proof}[Proof of Theorem \ref{thm:rho=3toric}]

By Theorem \ref{thm:rho=3}, we obtain a diagram
$$\xymatrix{X \ar[r]^{\sigma} \ar@/_1pc/[rr]_{\varphi} & Y \ar[r]^{\pi} & Z}, $$
where $\sigma:X\to Y$ is a divisorial contraction sending a toric prime divisor $E$ onto an invariant subvariety $A$ of codimension two on $Y$, and $\pi$ and $\varphi$ are both extremal contractions of fiber type whose fibers are of dimension 1. All these varieties are $\mathbb{Q}$-factorial toric Fano varieties.

Let us first prove that $\pi:Y \to Z$ is a $\mathbb{P}^1-$bundle. It follows from \cite[Proposition II. 1.1]{Gr71} that $S_\pi$, the locus of points of $Z$ over which $\pi$ is not a smooth morphism, is a closed subset of $Z$. Moreover, as we noticed in Remark \ref{fibersP1}, we have that if $X$ is a Fano variety satisfying $(\dag)$ then $\pi$ and $\varphi$ are {\it generalized conic bundles} and therefore the 1-cycles associated to singular fibers of $\pi$ are of the form $[F]=[C]+[C']$ with $C$ and $C'$ (eventually coincident) irreducible and generically reduced rational curves on $Y$ such that $C_{\redu} \cong C'_{\redu} \cong \mathbb{P}^1$ and $-K_X \cdot [\widetilde{C}] = -K_X \cdot [\widetilde{C}']= -K_Y \cdot [C] = -K_Y \cdot [C']=1$, where $\widetilde{C}$ (resp. $\widetilde{C}'$) is the total transform of $C$ (resp. $C'$) on $X$ via $\sigma$. In particular, we observe as in the Proof of Corollary \ref{thm:threefold} that the singular fibers of $\pi$ must be disjoint from $A\subseteq Y$ and hence they are contained in the Gorenstein locus of $Y$. We conclude from 
Lemma \ref{toricfibration} that $\pi:Y \to Z$ is a $\mathbb{P}^1-$bundle isomorphic to the projectivization of the rank 2 split vector bundle $E=\mathcal{L}'\oplus \mathcal{L}$.

As $\mathbb{P}_Z(E)\cong \mathbb{P}_Z(E\otimes \mathcal{M})$ for any line bundle $\mathcal{M}\in \Pic(Z)$, we can suppose that $\mathcal{L}'\cong \mathcal{O}_Z$. On the other hand, since $\Pic(Z)$ is isomorphic to $\mathbb{Z}$, we can consider an ample generator $\mathcal{O}_Z(1)$ of $\Pic(Z)$ and an integer $a\in \mathbb{Z}$ such that $\mathcal{L}\cong \mathcal{O}_Z(a)$. Up to tensor by $\mathcal{L}^{\vee}$, we can always suppose that $a\geq 0$. In particular, both $Y$ and $Z$ must have at most terminal singularities, since $Y$ has a most a finite number of canonical singularities and $\pi$ is locally trivial. Moreover, $0\leq a \leq i_Z -1$ by Proposition \ref{thm:rho=2 toric fibration} below.

It should be noticed that in this case $A\subseteq Y$ is contained in one of the two disjoint invariant sections associated to the $\mathbb{P}^1-$bundle $\pi:Y\to Z$. In fact, by Theorem \cite[Theorem 2.4, Corollary 2.5]{Re83} (taking $\alpha=0$ and $\beta=n-1$) we have that $\pi:Y \to Z$ is defined by a wall relation of the form
$$b_n u_n +\sum_{i=1}^{n-1}0\cdot u_i + b_{n+1} u_{n+1}=0, $$
where $\omega=\operatorname{cone}(u_1,\ldots,u_{n-1})$ is a wall generating the extremal ray associated to this contraction, which separates the two maximal cones $\sigma=\operatorname{cone}(u_1,\ldots,u_{n-1},u_n)$ and $\sigma'=\operatorname{cone}(u_1,\ldots,u_{n-1},u_{n+1})$.

Thus $U(\omega)=\operatorname{cone}(u_n,u_{n+1})=\operatorname{Span}(u_n)=\operatorname{Span}(u_{n+1})\cong \mathbb{R}$ and $\pi:Y \to Z$ is induced by the quotient $N_\mathbb{R}\to N_\mathbb{R}/U(\omega)$. Since $A\subseteq D_Y=\sigma(D)$ and $\pi|_{D_Y}$ is a finite morphism, $A$ is sent onto an invariant subvariety of dimension $n-2$, a divisor on $Z$. Thus, $A\subseteq D(u_n):=D_0$ or $A\subseteq D(u_{n+1}):=D_\infty$, otherwise it will be sent onto a subvariety of dimension $n-3$.
 
If we suppose that $A\subseteq D_\infty$, where $D_\infty \cong Z \subseteq Y$ is one of these disjoint invariant sections, then for any invariant affine open subset $U\subseteq Z$ such that $\pi^{-1}(U)\cong U \times \mathbb{P}^1$ we will have
$$\pi^{-1}(U) \setminus (\pi^{-1}(U)\cap D_\infty) \cong U \times \mathbb{A}^1.  $$
The open set $ U \times \mathbb{A}^1$ is therefore isomorphic to an open set on $Y$ contained in the locus where $\sigma^{-1}:Y\dashrightarrow X$ is an isomorphism. In particular, $U\times \mathbb{A}^1$ is an affine Gorenstein toric variety and thus $U$ is also Gorenstein. We conclude in this way that both $Y$ and $Z$ are Gorenstein varieties.

As a consequence of the formula $K_X = \sigma^*(K_Y)+ E$ we have that $E=\Exc(\sigma)$ is a Cartier divisor. Let us prove that $X\to Y$ verifies the universal property of the blow-up. The short exact sequence of sheaves
$$0 \to \mathcal{O}_X(-E) \to \mathcal{O}_X \to \mathcal{O}_E \to 0 $$
gives
$$0\to \sigma_* \mathcal{O}_X(-E)\to \sigma_* \mathcal{O}_X \to (\sigma|_E)_*\mathcal{O}_E \to \operatorname{R}^1 \sigma_* \mathcal{O}_X(-E)\to \cdots, $$
where $\sigma_* \mathcal{O}_X = \mathcal{O}_Y$ since $\sigma:X\to Y$ is a contraction.

Notice that $\sigma|_E:E\to A$ is a $\mathbb{P}^1-$bundle. In fact, since $K_X$ is a Cartier divisor and $-K_X\cdot [F]=1$ for any non-trivial fiber $F$ of $\sigma$, it follows that the scheme theoretic fiber $F$ is an irreducible and generically reduced rational curve on $X$. Then, by \cite[Theorem II.2.8]{Kol96}, $\sigma|_E:E\to A$ is a $\mathbb{P}^1-$bundle and thus $(\sigma|_E)_*\mathcal{O}_E = \mathcal{O}_A$. 
 
On the other hand, the Cartier divisor $-(K_X+E)$ is $\sigma$-ample and therefore $\operatorname{R}^i \sigma_* \mathcal{O}_X(-E)=0$ for $i>0$, by \cite[Vanishing Theorem 1.1]{AW95}. Hence, the above long exact sequence becomes
$$0 \to  \sigma_* \mathcal{O}_X(-E)\to \mathcal{O}_Y \to \mathcal{O}_A \to 0,$$
and thus $\mathcal{I}_A \cong \sigma_* \mathcal{O}_X(-E)$.

Let us follow \cite{AW93} and notice that $\sigma:X\to Y$ is a local contraction supported by the Cartier divisor $K_X-E$. Let $F$ be any non-trivial fiber of $\sigma$. Then, by \cite[Theorem 5.1]{AW93}, the evaluation morphism
$$\sigma^* \sigma_* \mathcal{O}_X(-E) \to \mathcal{O}_X(-E) $$
is surjective at every point of $F$. On the other hand, $\sigma_* \mathcal{O}_X(-E)\cong \mathcal{I}_A$ and $\sigma^{-1}\mathcal{I}_A \cdot \mathcal{O}_X$ is defined to be the image of $\sigma^*\mathcal{I}_A\to \mathcal{O}_X(-E)$. Thus, $\sigma^{-1}\mathcal{I}_A\cong \mathcal{O}_X(-E)$ is an invertible sheaf. 

Then, by the universal property of the normalized blow-up, $\sigma$ factorizes as
$$\xymatrix{X \ar[r]^{\tau \quad} \ar@/_1pc/[rr]_{\sigma} & \overline{\operatorname{Bl}_A(Y)} \ar[r]^{\quad \nu\circ\varepsilon} & Y}, $$
where $\varepsilon: \operatorname{Bl}_A(Y)\to Y$ is the blow-up of the coherent sheaf of ideals $\mathcal{I}_A$ and $\nu:\overline{\operatorname{Bl}_A(Y)}\to \operatorname{Bl}_A(Y)$ its normalization. 

Since $\sigma$ contracts only the irreducible divisor $E$, $\tau$ contracts no divisor. If $\tau$ is not finite, it is a small contraction sending a curve $C\subseteq E$ to a point. The rigidity lemma \cite[Lemma 1.6]{KM98} applied to the $\mathbb{P}^1$-bundle $E\to A$ and the morphism $\tau(E)\to A$ implies that $\tau$ contracts the divisor $E$, a contradiction. Hence $\tau$ is a finite and birational morphism onto a normal variety, and therefore $\tau$ is an isomorphism by Zariski's Main Theorem.

Finally, it follows from \cite[Proposition 11.4.22]{CLS} that if $X=X(\Delta_X)$ is a $\mathbb{Q}$-factorial toric variety with Gorenstein terminal singularities, then $\operatorname{codim}_X \operatorname{Sing}(X)\geq 4$. Therefore, if we start with a toric variety $X$ of dimension 3 or 4 satisfying the hypothesis of Theorem \ref{thm:rho=3toric}, we will obtain that $Z$ is a smooth toric variety, implying that $Y$ and $X$ must be both smooth toric varieties too. 
\end{proof}

\section{The case $\rho_X=2$ for toric varieties}\label{section:rho=2toric}

In this section we study the extremal contractions described in \textsection \ref{section:extremalcontractions} for toric varieties with Picard number 2. Let us begin with the proof of Proposition \ref{thm:rho=2 toric fibration}.

\begin{proof}[Proof of Proposition \ref{thm:rho=2 toric fibration}]
The isomorphism $X\cong \mathbb{P}_Y(\mathcal{O}_Y\oplus \mathcal{O}_Y(a))$, with $a\geq 0$, follows from Lemma \ref{toricfibration} and the fact that $\rho_Y=1$. On the other hand, the condition $0\leq a \leq i_Y$ is in fact equivalent to the condition of $X$ being Fano. To see this, let us recall that the adjunction formula for projectivized vector bundles \cite[\textsection 1.1.7]{BS95} gives
$$K_X = \pi^*\mathcal{O}_Y(a-i_Y)- 2 \xi,$$
where $\xi$ is the tautological divisor on $\mathbb{P}_Y(\mathcal{O}_Y\oplus \mathcal{O}_Y(a))$ and $i_Y$ is defined in such a way $\mathcal{O}_Y(-K_Y)\cong \mathcal{O}_Y(i_Y)$. Now, we notice that if $F$ is any fiber $\pi$, then
$$ -K_X\cdot [F] = 2\xi \cdot [F] =2 > 0.$$ 
On the other hand, if $C\subseteq Y$ is an irreducible reduced curve and $C_X \subseteq X$ is the image of $C$ by the section associated to the quotient $\mathcal{O}_Y\oplus \mathcal{O}_Y(a) \to \mathcal{O}_Y$, then
$$-K_X\cdot [C_X] = \deg(C)\cdot (i_Y-a). $$
Hence, $X$ is Fano if and only if $0 \leq a \leq i_Y-1$, since the numerical classes of these curves above generates the Mori cone of $X$. Finally, let us note that the two invariant sections of $\pi$, $s_0:Y\to X$ and $s_\infty: Y \to Z$, provide divisors $D_0=s_0(Y)$ and $D_\infty = s_\infty(Y)$ on $X$ such that $D_0\cong D_\infty \cong Y$ and hence $\dim_\mathbb{R} \N_1(D_0,X)=\dim_\mathbb{R} \N_1(D_\infty,X)= 1$.
\end{proof}

Let us prove now Theorem \ref{thm:rho=2 toric div isolated}.

\begin{proof}[Proof of Theorem \ref{thm:rho=2 toric div isolated}]

Let $R\subseteq \overline{\NE}(X)$ be an extremal ray such that $D\cdot R > 0$ and let $\pi:X\to Y$ be the corresponding extremal contraction. If $\pi$ is of fiber type then Proposition \ref{thm:rho=2 toric fibration} provides an isomorphism $X \cong \mathbb{P}_Y(\mathcal{O}_Y\oplus \mathcal{O}_Y(a))$ for some toric variety $Y$. Moreover, $Y$ is a $\mathbb{Q}$-factorial Gorenstein toric Fano variety of dimension $(n-1)$ with terminal singularities and Fano index $i_Y$, and $0\leq a \leq i_Y - 1$. Since $X$ has isolated singularities, and $\pi:X\to Y$ is a locally trivial $\mathbb{P}^1$-bundle, both $X$ and $Y$ are smooth in this case. It follows that $Y\cong \mathbb{P}^{n-1}$ and $i_Y = n$, which leads us to the first case.

Let us suppose that $\pi:X\to Y$ is a birational contraction. It follows from Proposition \ref{extcont} that $\pi:X\to Y$ is a divisorial contraction sending an irreducible invariant divisor $E=V(u_E)$ onto a codimension two subvariety $A\subseteq Y$, and $Y$ is a $\mathbb{Q}$-factorial toric Fano variety. Moreover, $E\cdot [F]=K_X \cdot [F]=-1$ for every non-trivial fiber $F$ of $\pi$. In particular, Lemma \ref{toricdivisorial} implies that $Y$ has isolated Gorenstein singularities, $A$ is contained in the smooth locus of $Y$, and $X \cong \operatorname{Bl}_A(Y)$. Additionally, it follows from the proof of Lemma \ref{toricdivisorial} that $Y$ contains $n-1$ smooth maximal cones, each of them containing the cone of dimension two defining the subvariety $A$. 

Since $Y$ is a complete and simplicial toric variety of dimension $n$ and Picard number one, the fan of $Y$ contains exactly $n+1$ maximal cones (corresponding to $n+1$ invariant points), $Y$ has at most 2 singular points and they are outside $A\subseteq Y$.

Let us denote by $u_1,\ldots,u_{n+1}\in N$ the primitive lattice vectors generating the rays in the fan of $Y$ and suppose that $A=\operatorname{cone}(u_1,u_2)$, and thus that the extremal contraction $\pi$ is defined by the wall relation $u_E= u_1+u_2$. There are exactly $n-1$ walls in $\Delta_X$ satisfying this relation (corresponding to the fibers over the $n-1$ invariants points of $A$). Namely, the walls
 $$\omega_i = \operatorname{cone}(u_E,u_3,\ldots,\widehat{u_i},\ldots,u_{n+1})\quad \text{with }i\in\{3,\ldots,n+1\}, $$
separating the two maximal cones $\sigma_i=\operatorname{cone}(u_E,u_3,\ldots,\widehat{u_i},\ldots,u_{n+1},u_1)$ and $\sigma_i'=\operatorname{cone}(u_E,u_3,\ldots,\widehat{u_i},\ldots,u_{n+1},u_{2})$.

Let us prove that $Y$ is isomorphic to one of the listed varieties. Since the fan of $Y$ contains a smooth maximal cone, we can suppose that the vectors $u_1,\ldots,u_n$ correspond to the first $n$ elements of the canonical basis of $\mathbb{Z}^n$, by Remark \ref{smoothincodimk}. 

Let us write $u_{n+1}=(-a_1,\ldots,-a_n)$, with $a_i\in \mathbb{Z}_{>0}$ for $i\in\{1,\ldots,n\}$. For each $i\in\{3,\ldots,n+1\}$ we have that $\mult (\sigma_i)=1$. Therefore, Proposition \ref{toricsing0} leads $ \mult(\sigma_i)=|\det(e_1,e_1+e_2,e_3,\ldots,\widehat{e_i},\ldots,e_n,u_{n+1})|=a_i=1 $, for $i\in \{3,\ldots,n\}$. Hence, we can write $u_{n+1}=(-a,-b,-1,\ldots,-1)$, with $a,b\in \mathbb{Z}_{>0}$. It should be noticed that $Y$ has isolated singularities if and only if $\gcd(a,b)=1$.

Thus, $Y\cong \mathbb{P}(1^{n-1},a,b)$ with $a,b\in \mathbb{Z}_{>0}$ relatively prime integers. Now, $Y$ is a Gorenstein Weighted Projective Space if and only if $a|(n-1+a+b)$ and $b|(n-1+a+b)$, by \cite[Lemma 3.5.6]{CK99}. Equivalently, $a|(n-1+b)$ and $b|(n-1+a)$. If $a=b$ the only possibility is $a=b=1$, leading to (a): $Y\cong \mathbb{P}^n$.

Let us suppose that $1\leq a < b$ and notice that $ab|(n-1+a+b)$ since $\gcd(a,b)=1$. On the other hand,
\begin{equation*}
\begin{aligned}
\dfrac{n-1+a+b}{ab}=1 & \Leftrightarrow \dfrac{n-1+a+b}{ab} < 2 \\
 & \Leftrightarrow (2b-1)\left(a-\frac{1}{2}\right)-n+\frac{1}{2}>0.
\end{aligned}
\end{equation*}
Since $a\geq 1$, this condition is fulfilled when $(2b-1)\cdot \frac{1}{2}-n+\frac{1}{2}>0\Leftrightarrow b>n$.

 Therefore, $b\geq n+1$ implies that $n-1+a+b=ab$ or, equivalently, $n=(a-1)(b-1)$. This leads to $a=2$, $b=n+1$ and hence to (b): $Y\cong \mathbb{P}(1^{n-1},2,n+1)$ and $n$ must be even. Finally, if $1\leq a < b \leq n$ we get the last case (c): $Y\cong \mathbb{P}(1^{n-1},a,b)$.

Conversely, given one of these listed varieties $Y$ with their fans as above and considering $X$ to be the blow-up of $Y$ along $A=V(e_1,e_2)$, we obtain a projective toric variety satisfying the hypothesis.

In fact, since $A$ is contained in the smooth locus of $Y$ we obtain that $X$ is a $\mathbb{Q}$-factorial Gorenstein toric variety with isolated canonical singularities. In order to prove that $X$ is Fano we need to analyze the second extremal contraction that corresponds to the wall relation on $X$ given by
$$ bu_E+(a-b)e_1+e_3+\cdots+e_n+u_{n+1}=0. $$
In any of the three listed cases we will obtain
$$-K_X\cdot [C_\omega] = \dfrac{n-1+a}{b}\in \mathbb{Z}_{>0}, $$
proving that $X$ is a Fano variety, by the Cone Theorem and Kleiman's criterion of ampleness. Finally, let us note that $A \subseteq Y$ is the intersection of two invariant prime divisors $D_1=V(u_1)$ and $D_2=V(u_2)$, whose strict transforms $\widehat{D}_1$ and $\widehat{D}_2$ in $X$ satisfy $\dim_\mathbb{R} \N_1(\widehat{D}_1,X)=\dim_\mathbb{R} \N_1(\widehat{D}_2,X)= 1$, which follows from the wall relation $u_E=u_1+u_2$ and the description of the fans defining these divisors as toric varieties (see for instance \cite[Proposition 3.2.7]{CLS}).
\end{proof}
 
As a consequence we obtain the following list of possible admissible weights $(a,b)\in \mathbb{Z}_{>0}^2$ that corresponds to varieties $Y\cong \mathbb{P}(1^{n-1},a,b)$ as in Theorem \ref{thm:rho=2 toric div isolated}.2. Compare with \cite{Kas13} and \cite{RM85}.

\begin{table}[H]
\centering
\caption{Admissible $Y\cong \mathbb{P}(1^{n-1},a,b)$ as in Theorem \ref{thm:rho=2 toric div isolated}.2}
\label{my-label}
\begin{tabular}{c|c}
$n$ & Weights $(a,b)\in \mathbb{Z}_{>0}^2$ \\ \hline
 3& $(1,1),(1,3)$ \\  \hline
 4& $(1,1),(1,2),(1,4),(2,5)$ \\  \hline
 5& $(1,1),(1,5)$ \\ \hline
 6& $(1,1),(1,3),(1,6),(2,7),(3,4)$ \\ \hline
 7& $(1,1),(1,7)$ \\ \hline
 8& $(1,1),(1,2),(1,4),(1,8),(2,3),(2,9),(3,5)$ \\ \hline
 9& $(1,1),(1,3),(1,9)$ \\ \hline
 10& $(1,1),(1,2),(1,5),(1,10),(2,11)$  \\ \hline
\end{tabular}
\end{table}

\begin{rema}

From the wall relation 
$$ bu_E+(a-b)e_1+e_3+\cdots+e_n+u_{n+1}=0 $$
we can deduce the nature of the second extremal contraction $\varphi:X\to W$. In the smooth case it is a contraction of fiber type onto $W \cong \mathbb{P}^1$. In the singular case, it is a divisorial contraction sending its exceptional locus onto a point. Moreover, $W\cong \mathbb{P}(1^{n-1},a,b-a)$ since
$$au_E+(b-a)e_2+e_3+\cdots +e_n+u_{n+1}=0.$$

\end{rema} 

We conclude with an example showing that is the hypothesis of isolated singularities in Theorem \ref{thm:rho=2 toric div isolated}.2 cannot be dropped. We exhibit an example of a $\mathbb{Q}$-factorial Gorenstein toric Fano fivefold $X$ with terminal singularities, whose singular locus is one-dimensional and that admits a birational extremal contraction $\pi:X\to Y$ 
which is not a blow-up, but only a blow-up in codimension two, and where $Y$ is a non-Gorenstein $\mathbb{Q}$-factorial toric Fano fivefold.

\begin{exem}\label{contreexample} Let us consider the fan $\Delta_X \subseteq \mathbb{R}^5$ generated by the vectors
$$e_1,e_2,e_3,e_4,e_5,u_6,u_E, $$
where $\{e_1,\ldots,e_5\}$ is the canonical basis of $\mathbb{R}^5$, $u_6=(-1,-1,-1,-2,-3)$ and $u_E=(-1,-1,-1,-2,-2)$. 

It can be checked by using \cite{Mac2} that $X$ is a $\mathbb{Q}$-factorial Gorenstein Fano fivefold. Moreover, it can be checked by hand that its singular locus is one-dimensional, consisting only of terminal points, and given by 
$$\Sing(X)=V(e_1,e_2,e_3,u_E)\cup V(e_1,e_2,e_3,e_4,u_6).$$

The wall relation $e_5+(-1)u_E+u_6=0$ determines an extremal contraction $\pi:X\to Y$ sending the Weil divisor $V(u_E)\subseteq X$ (which is not Cartier) onto $A=V(e_5,u_6)\subseteq Y$. Finally, the relation
$$e_1+e_2+e_3+2e_4+3e_5+u_6=0 $$
implies that $Y\cong \mathbb{P}(1^4,2,3)$, which is not Gorenstein, by \cite[Lemma 3.5.6]{CK99}. 
\end{exem}

\section{Toric universal coverings in codimension 1}\label{section:toric1coverings}

Let us recall that, every complete $\mathbb{Q}$-factorial toric variety of Picard number one can be seen as the quotient of a Weighted Projective Space by the action of a finite group such that the corresponding quotient map is {\it quasi-\'etale}, i.e., a finite surjective morphism which is \'etale in codimension 1 (see Definition \ref{defi:quasietale} and Example \ref{FWPS}). In this sense, Weighted Projective Spaces are the simplest singular $\mathbb{Q}$-factorial toric varieties of Picard number one. In the context of classification of toric Fano varieties satisfying the condition $(\dag)$, we would like to consider similar covers, but without the restriction on the Picard number. This is because quasi-\'etale morphisms have no ramification divisor and thus we can easily show that the resulting covering variety will be Fano and will satisfy the property $(\dag)$ (see Proposition \ref{coveringsing}). Therefore, in principle, it would be enough to consider the simplest toric varieties from the point of view of these quasi-\'etale morphisms in order to give a classification for the general case. We also refer the reader to the recent article of D. Greb, S. Kebekus and T. Peternell \cite{GKP16} where they constructed and studied these spaces for klt pairs.

The aim of this section is therefore to recall some recent results due to M. Rossi and L. Terracini concerning the construction and combinatorics of these quasi-\'etale universal covers in the toric setting and to describe the lifting of the extremal contractions appearing in the previous sections to these spaces. 

Let us recall some of the results and definitions introduced in \cite{Buc02}. We will follow the terminology of {\it quasi-\'etale morphisms}, introduced by F. Catanese \cite{Cat07}.

\begin{defi}[Quasi-\'etale morphism]\label{defi:quasietale}
Let $X$ be a complex normal algebraic variety. A {\it quasi-\'etale} morphism (or a {\it 1-covering}) is a finite surjective morphism $\varphi:Y\to X$ which is unramified in codimension 1. Namely, there exists a subvariety $V\subseteq X$ of codimension $\operatorname{codim}_X(V) \geq 2$ such that $\varphi|_{\varphi^{-1}(X\setminus V)}:\varphi^{-1}(X\setminus V)\to X\setminus V$ is \'etale.

Moreover, a {\it universal quasi-\'etale morphism} is a quasi-\'etale morphism $\varphi:Y \to X$ which is universal in the sense that for any quasi-\'etale morphism $f:Z\to X$ there exists a (not necessarily unique) quasi-\'etale morphism  $g:Y\to Z$ such that $\varphi=f\circ g$.
\end{defi}

\begin{prop}[{\cite[Corollary 3.10, Remark 3.14]{Buc02}}] A quasi-\'etale morphism $\varphi:Y\to X$ is universal if and only if $\pi_1(Y_{\lisse})$ is trivial.
\end{prop}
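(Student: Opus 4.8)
The plan is to set up the standard dictionary between quasi-\'etale covers of a normal variety and finite \'etale covers of its smooth locus, and then to transport the universal property of Definition \ref{defi:quasietale} across this dictionary, where it becomes the assertion that $Y_{\lisse}$ admits no nontrivial connected finite \'etale cover. Concretely, I would first record that for a normal variety one has $\codim_X \Sing(X)\geq 2$, so that restriction sends a quasi-\'etale morphism onto $X$ to a finite \'etale cover of $X_{\lisse}$, and conversely that any finite \'etale cover of $X_{\lisse}$ extends, after normalizing $X$ in the corresponding function field, to a quasi-\'etale morphism onto $X$. Zariski's Main Theorem together with Zariski--Nagata purity of the branch locus guarantee that these two operations are mutually inverse on normal objects. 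Via the Riemann existence theorem this identifies connected quasi-\'etale covers of $X$ with finite-index subgroups of $\pi_1(X_{\lisse})$, which in the $\mathbb{Q}$-factorial toric setting is a finite group, so that a universal object exists and is itself a finite quasi-\'etale cover.

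For the implication $(\Rightarrow)$ I would argue by contraposition through a degree count. Suppose $\varphi\colon Y\to X$ is universal but $\pi_1(Y_{\lisse})\neq 1$; then $Y_{\lisse}$ carries a nontrivial connected finite \'etale cover, which extends to a quasi-\'etale morphism $h\colon W\to Y$ with $\deg h>1$ and $W$ normal. The composite $f:=\varphi\circ h\colon W\to X$ is again quasi-\'etale, so universality furnishes a quasi-\'etale $g\colon Y\to W$ with $\varphi=f\circ g=\varphi\circ(h\circ g)$. Taking degrees gives $\deg\varphi=\deg\varphi\cdot\deg h\cdot\deg g$, whence $\deg h=1$, a contradiction. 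Thus $\pi_1(Y_{\lisse})$ must be trivial.

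For the converse $(\Leftarrow)$, assume $\pi_1(Y_{\lisse})=1$ and let $f\colon Z\to X$ be an arbitrary quasi-\'etale morphism with $Z$ normal. Let $U\subseteq Y_{\lisse}$ be the open subset over which $\varphi$ restricts to a finite \'etale cover of $X_{\lisse}$; since $\codim_Y(Y\setminus U)\geq 2$ one has $\pi_1(U)\cong \pi_1(Y_{\lisse})=1$, so $U$ is simply connected. The base change $U\times_{X_{\lisse}} Z^{\circ}\to U$ of the \'etale cover $Z^{\circ}\to X_{\lisse}$ associated with $f$ is then a trivial cover, and any of its sections produces a morphism $U\to Z$ over $X$. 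Because $Y$ is normal, $\codim_Y(Y\setminus U)\geq 2$, and $f$ is finite, this morphism extends uniquely to $g\colon Y\to Z$ with $\varphi=f\circ g$; the map $g$ is finite (as $\varphi$ and $f$ are), surjective, and \'etale in codimension one, hence quasi-\'etale. This verifies the universal property.

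The main obstacle is making the first paragraph rigorous: one must show that the restriction and normalization operations really are inverse equivalences on \emph{normal} quasi-\'etale covers, which requires purity to extend \'etale covers across the codimension-two singular locus and Zariski's Main Theorem to extend the sections appearing in the converse, together with the comparison between the topological $\pi_1(Y_{\lisse})$ and finite \'etale covers furnished by the Riemann existence theorem. Once this dictionary is in place, and once one knows that $\pi_1(X_{\lisse})$ is finite so that the universal object is genuinely a finite cover, both implications become formal.
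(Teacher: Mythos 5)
The paper does not prove this proposition at all: it is quoted verbatim from \cite[Corollary 3.10, Remark 3.14]{Buc02}, so there is no in-text argument to compare yours against. Your reconstruction is the standard one and is essentially sound: purity of the branch locus (Zariski--Nagata) correctly identifies normal quasi-\'etale covers of $X$ with finite \'etale covers of $X_{\lisse}$, the degree count in the forward direction is clean (degrees of finite surjective morphisms of irreducible varieties multiply under composition, so $\deg h\cdot\deg g=1$ forces $\deg h=1$), and the converse is the right argument: $U=\varphi^{-1}(X_{\lisse})$ is connected and simply connected, so the pulled-back \'etale cover splits, a section gives $U\to Z$ over $X$, and the closure of its graph in $Y\times_X Z$ is finite and birational over the normal variety $Y$, hence an isomorphism by Zariski's Main Theorem, yielding the required factorization $g$.

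The one step you should make explicit is in the forward direction: from $\pi_1(Y_{\lisse})\neq 1$ you produce ``a nontrivial connected finite \'etale cover'' of $Y_{\lisse}$, which requires that $\pi_1(Y_{\lisse})$ admit a proper subgroup of finite index. Since the paper's notion of quasi-\'etale morphism is a \emph{finite} cover, what universality detects in general is only the triviality of the profinite completion of $\pi_1(Y_{\lisse})$, and for an arbitrary normal variety that is a priori weaker than triviality of $\pi_1(Y_{\lisse})$ itself. You do note that in the $\mathbb{Q}$-factorial toric setting the group is finite (it is $N/N_{\Delta_X(1)}$, by Theorem \ref{toric pi_1^1}), which closes this gap in every case where the paper uses the proposition; but as written your second paragraph treats the implication as formal, and it is not. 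Either restrict the claim to the setting where $\pi_1(Y_{\lisse})$ is finite (or residually finite), or state the general version with the profinite completion. With that caveat recorded, the argument is complete.
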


\begin{prop}\label{coveringsing} Let $\varphi:Y\to X$ be a quasi-\'etale morphism between normal projective varieties. Then, 
\begin{enumerate}
\item If $K_X$ is a Cartier divisor, then $K_Y$ is a Cartier divisor.
\item If $X$ is a Fano variety, then $Y$ is a Fano variety.
\item If $X$ has terminal (resp. canonical) singularities, then $Y$ also has terminal (resp. canonical) singularities.
\end{enumerate}
\end{prop}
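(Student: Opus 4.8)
The plan is to deduce all three assertions from the single identity $K_Y = \varphi^* K_X$, which expresses that a quasi-\'etale morphism carries no ramification divisor. First I would invoke the ramification (Hurwitz) formula for the finite, dominant, separable morphism $\varphi$ of normal varieties: $K_Y = \varphi^* K_X + R$, where $R = \sum_P (e_P - 1)\, P$ runs over the prime divisors $P \subseteq Y$ and $e_P$ denotes the ramification index of $\varphi$ along $P$. By hypothesis $\varphi$ is \'etale outside a closed set $V$ with $\operatorname{codim}_X(V) \geq 2$, so every prime divisor of $Y$ dominates a prime divisor of $X$ with $e_P = 1$; hence $R = 0$ and $K_Y = \varphi^* K_X$ as classes in $\operatorname{Cl}(Y)$. (Equivalently, on the \'etale locus $W = \varphi^{-1}(X \setminus V)$, whose complement has codimension $\geq 2$, one has $\omega_Y|_W \cong (\varphi|_W)^*(\omega_X|_{X\setminus V})$, and this extends to $\omega_Y \cong \varphi^{[*]}\omega_X$ because $\omega_Y$ is reflexive.)

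Granting this, parts (1) and (2) are immediate. For (1), if $K_X$ is Cartier then its pullback $\varphi^* K_X$ is Cartier---pullback of a Cartier divisor is Cartier for any morphism---so $K_Y$ is Cartier. For (2), choose $r > 0$ with $-rK_X$ Cartier and ample; then $-rK_Y = \varphi^*(-rK_X)$ is Cartier, and it is ample because the pullback of an ample divisor along a finite surjective morphism is ample. Thus a positive multiple of $-K_Y$ is Cartier and ample, i.e. $Y$ is Fano.

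For (3) I would compare discrepancies through a covering of resolutions. Fix a log resolution $h \colon \tilde X \to X$ and let $g \colon \tilde Y \to Y$ be the normalization of the fibre product $Y \times_X \tilde X$; this produces a finite morphism $\tilde\varphi \colon \tilde Y \to \tilde X$ together with a proper birational $g$. Writing $K_{\tilde X} = h^* K_X + \sum_i a(F_i, X) F_i$ and $K_{\tilde Y} = g^* K_Y + \sum_j a(E_j, Y) E_j$ over all prime divisors, substituting $K_Y = \varphi^* K_X$, using $g^* \varphi^* = \tilde\varphi^* h^*$ on the $\mathbb{Q}$-Cartier class $K_X$, and comparing with the ramification formula $K_{\tilde Y} = \tilde\varphi^* K_{\tilde X} + R_{\tilde\varphi}$, all the $K_X$-terms cancel and one is left with $\sum_j a(E_j, Y) E_j = \tilde\varphi^*\big(\sum_i a(F_i, X) F_i\big) + R_{\tilde\varphi}$. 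Reading off the coefficient along a $g$-exceptional divisor $E_j$ which maps to $F_i$ with ramification index $r_j$ yields
\[
a(E_j, Y) = r_j\, a(F_i, X) + (r_j - 1).
\]
If $X$ is canonical then $a(F_i, X) \geq 0$, so $a(E_j, Y) \geq 0$ and $Y$ is canonical. In the terminal case one first checks that a $g$-exceptional $E_j$ can only lie over an $h$-exceptional $F_i$: otherwise $F_i$ would be the strict transform of a divisor $B \subseteq X$, and since $\varphi$ is finite the center of $E_j$ on $Y$ would map onto $B$ and hence be a divisor, contradicting exceptionality. Thus $a(F_i, X) > 0$, and the formula forces $a(E_j, Y) > 0$; since it suffices to test positivity on a suitable model extracting each given divisor over $Y$, we conclude that $Y$ is terminal.

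The main obstacle is part (3): arranging the covering of resolutions so that an arbitrary divisor over $Y$ is seen on a model finite over a resolution of $X$, and keeping track of the ramification of $\tilde\varphi$ along the exceptional divisors in the discrepancy identity above. This is precisely the standard behaviour of discrepancies under finite morphisms \'etale in codimension one (cf. \cite[Proposition 5.20]{KM98}), which one may also invoke directly once $K_Y = \varphi^* K_X$ has been established.
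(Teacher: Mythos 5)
Your proposal is correct and follows essentially the same route as the paper: the paper's proof also rests on the identity $K_Y=\varphi^*K_X$ coming from the absence of a ramification divisor, deduces (2) from finiteness of $\varphi$ (citing \cite[Proposition 5.1.12]{EGAII} for ampleness of the pullback), and for (3) simply cites \cite[Proposition 3.16]{Kol97} --- which is exactly the discrepancy comparison you work out by hand and also note can be invoked directly via \cite[Proposition 5.20]{KM98}.
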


\begin{proof}
As $\varphi:Y\to X$ is unramified in codimension 1, there is no ramification divisor and hence $\varphi^*K_X = K_Y$, implying (1). The point (2) follows from \cite[Proposition 5.1.12]{EGAII}, while (3) follows from \cite[Proposition 3.16]{Kol97}.
\end{proof}

Moreover, we have the following characterization of the fundamental group of the smooth locus for $\mathbb{Q}$-factorial toric varieties, which follows from \cite[Corollary 3.10, Theorem 4.8]{Buc02} and \cite[Theorem 2.4]{RT15a}.

\begin{theo}\label{toric pi_1^1} 
Let $X$ be a $\mathbb{Q}$-factorial toric variety defined by the fan $\Delta_X \subseteq N_\mathbb{R}$ and let $N_{\Delta_X(1)} \subseteq N$ be the sub-lattice of $N$ generated by the primitive lattice generators $u_\rho \in N$ of all the rays $\rho \in \Delta_X(1)$. Then,
$$\pi_1(X_{\lisse})\cong N/N_{\Delta_X(1)}\cong \operatorname{Tors}(\operatorname{Cl}(X)). $$

\end{theo}

\begin{exem}\label{FWPS} Let $X=X(\Delta_X)$ be a complete $\mathbb{Q}$-factorial toric variety of dimension $n$ such that $\rho_X=1$. Then, we will say that $X$ is a Fake Weighted Projective Space. 

This name comes from the following observation: the fan $\Delta_X$ has cone generators $u_0,\ldots,u_n\in \Delta_X(1)$, and the maximal cones of $\Delta_X$ are generated by the $n$-element subsets of $\{u_0,\ldots,u_n\}\subseteq N\cong \mathbb{Z}^n$. As they are linearly dependent,
$$\sum_{i=0}^n \lambda_i u_i = 0, $$
for some $\lambda_0, \ldots, \lambda_n\in \mathbb{Z}_{\geq 1}$. Therefore, $\pi_1(X_{\lisse})=\{0\}$ if and only if the primitive lattice vectors $u_0,\ldots,u_n\in N$ generate the lattice $N$. If it is the case we will have that $X\cong \mathbb{P}(\lambda_0,\ldots,\lambda_n)$, by \cite[Example 5.1.14]{CLS}.
\end{exem}

Following \cite{RT15a} and \cite{RT15b}, it is natural to consider $\mathbb{Q}$-factorial complete toric varieties with torsion-free class group as analogs of Weighted Projective Spaces.

\begin{defi}\label{PWS}
Let $X=X(\Delta_X)$ be a $\mathbb{Q}$-factorial complete toric variety of dimension $n$. We define the {\it canonical quasi-\'etale universal cover of $X$} to be the quasi-\'etale morphism $\pi_X:\widehat{X}\to X$ corresponding to the map of fans 
$$ \Pi_X:(N_{\Delta_X(1)},\Delta_X)\to (N_X,\Delta_X). $$
Moreover, we say that $X$ is a Poly Weighted Space (PWS) if 
$$\pi_1(X_{\lisse})\cong N/N_{\Delta_X(1)}\cong \operatorname{Tors}(\operatorname{Cl}(X))\cong \{0\}.$$
\end{defi}

After the recent works of M. Rossi and L. Terracini, there is an explicit combinatorial construction (via Gale duality) of the canonical quasi-\'etale universal cover of any $\mathbb{Q}$-factorial complete toric variety. This extends Example \ref{FWPS} and \cite[Theorem 6.4]{Buc02} to higher class group rank varieties (see \cite[Theorem 2.2]{RT15b} for details).

\vspace{2mm}

The remaining of the section will be devoted to study contractions $X\to Y$ of extremal rays as in the previous section via universal quasi-\'etale morphisms, and without the assumption of isolated singularities in the divisorial case. 

\vspace{2mm}

The case of extremal contractions of fiber type was studied by Y. Kawamata in \cite[Lemma 4.1]{Kaw06} and it is commonly known as {\it Kawamata's covering trick}. In our context, Proposition \ref{thm:rho=2 toric fibration} and Kawamata's covering trick specialize to the following result.

\begin{coro}\label{prop:1-covering,fiber type}
 Let $X$ be a toric Fano variety satisfying $(\dag)$. Assume that there exists an effective prime divisor $D\subseteq X$ such that $\dim_\mathbb{R}\N_1(D,X)=1$ and that $\rho_X=2$. Let $R\subseteq \overline{\NE}(X)$ be an extremal ray such that $D\cdot R > 0$ and let us denote by $\pi:X\to Y$ the corresponding extremal contraction. Assume that $\pi$ is of fiber type. Then there exist weights $\lambda_0,\ldots,\lambda_{n-1}\in \mathbb{Z}_{>0}$ and a cartesian diagram of toric varieties
$$
\xymatrix{
\widehat{X} \ar[r]^{\widehat{\pi}\hspace{1cm}} \ar[d]_{\pi_X} & \mathbb{P}(\lambda_0,\ldots,\lambda_{n-1}) \ar[d]^{\pi_Y} \\
X \ar[r]^\pi & Y
}
$$
where vertical arrows denote the corresponding canonical quasi-\'etale universal covers, and $\widehat{X}$ is a Gorenstein Fano PWS with terminal singularities such that $\rho_{\widehat{X}}=2$. Moreover, $\widehat{\pi}:\widehat{X}\to \mathbb{P}(\lambda_0,\ldots,\lambda_{n-1})$ leads to an isomorphism 
$$\widehat{X}\cong \mathbb{P}(\mathcal{O}_{\mathbb{P}(\lambda_0,\ldots,\lambda_{n-1})}\oplus \mathcal{O}_{\mathbb{P}(\lambda_0,\ldots,\lambda_{n-1})}(a)). $$
\end{coro}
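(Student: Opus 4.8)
The plan is to reduce everything to the combinatorics of the fans and to the structure of $X$ as a $\mathbb{P}^1$-bundle that is already in hand. First I would invoke Proposition \ref{thm:rho=2 toric fibration}, which gives an isomorphism $X\cong \mathbb{P}_Y(\mathcal{O}_Y\oplus \mathcal{O}_Y(a))$, where $Y$ is a $\mathbb{Q}$-factorial Gorenstein toric Fano variety of dimension $(n-1)$ with terminal singularities, $\rho_Y=1$, and $0\leq a\leq i_Y-1$; in particular $X$ itself is Gorenstein Fano with terminal singularities. Since $\rho_Y=1$, the variety $Y$ is a Fake Weighted Projective Space (Example \ref{FWPS}), so its canonical quasi-\'etale universal cover $\pi_Y:\widehat{Y}\to Y$ is a genuine Weighted Projective Space $\widehat{Y}\cong \mathbb{P}(\lambda_0,\ldots,\lambda_{n-1})$, the weights $\lambda_i$ being the coefficients of the unique wall relation $\sum_{i}\lambda_i v_i=0$ among the primitive ray generators $v_0,\ldots,v_{n-1}$ of $\Delta_Y$.

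The heart of the argument is a lattice computation. Writing $N_X=N_Y\oplus \mathbb{Z}$, the fan $\Delta_X$ of the $\mathbb{P}^1$-bundle has, besides the two fiber rays $(0,\pm 1)$, one ray lying over each $v_i$. I would compute the sublattice $N_{\Delta_X(1)}\subseteq N_X$ generated by the ray generators: the two fiber rays generate $\{0\}\oplus \mathbb{Z}$, and subtracting suitable multiples of $(0,1)$ from each base ray shows $(v_i,0)\in N_{\Delta_X(1)}$, whence $N_{\Delta_X(1)}=N_{\Delta_Y(1)}\oplus \mathbb{Z}$. Consequently the canonical cover $\widehat{X}$, which by Definition \ref{PWS} is the toric variety of the same fan $\Delta_X$ taken in the lattice $N_{\Delta_X(1)}$, is exactly the $\mathbb{P}^1$-bundle over $\widehat{Y}=\mathbb{P}(\lambda_0,\ldots,\lambda_{n-1})$ associated to the same bundle data, i.e. $\widehat{X}\cong \mathbb{P}(\mathcal{O}_{\mathbb{P}(\lambda_0,\ldots,\lambda_{n-1})}\oplus \mathcal{O}_{\mathbb{P}(\lambda_0,\ldots,\lambda_{n-1})}(a))$. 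Because the lattice inclusion $N_{\Delta_X(1)}=N_{\Delta_Y(1)}\oplus \mathbb{Z}\hookrightarrow N_Y\oplus \mathbb{Z}=N_X$ respects the direct-sum splitting and is the identity on the $\mathbb{Z}$-factor, the induced toric morphism $\pi_X$ commutes with the two projections and realizes $\widehat{X}$ as the fiber product $X\times_Y\widehat{Y}$; this yields the asserted cartesian square with $\widehat{\pi}$ the bundle projection. This is Kawamata's covering trick \cite[Lemma 4.1]{Kaw06} in toric language, and it is the step I expect to require the most care, since one must check that pulling the $\mathbb{P}^1$-bundle back along $\pi_Y$ produces precisely the \emph{intrinsically}-defined canonical cover of the total space rather than some other quasi-\'etale cover.

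It then remains to record the properties of $\widehat{X}$. By construction the ray generators of $\Delta_X$ span $N_{\Delta_X(1)}$, so Theorem \ref{toric pi_1^1} gives $\operatorname{Tors}(\operatorname{Cl}(\widehat{X}))\cong \{0\}$; thus $\widehat{X}$ is a PWS and $\pi_X$ is quasi-\'etale. Applying Proposition \ref{coveringsing} to the quasi-\'etale morphism $\pi_X:\widehat{X}\to X$, and using that $X$ is Gorenstein Fano with terminal singularities, I conclude that $\widehat{X}$ is likewise a Gorenstein Fano variety with terminal singularities. Finally, $\widehat{\pi}:\widehat{X}\to \widehat{Y}$ is a $\mathbb{P}^1$-bundle and $\rho_{\widehat{Y}}=1$ since $\widehat{Y}$ is a Weighted Projective Space, so $\rho_{\widehat{X}}=\rho_{\widehat{Y}}+1=2$, which completes the proof.
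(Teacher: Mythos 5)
Your proposal is correct and follows essentially the same route as the paper: both reduce to Proposition \ref{thm:rho=2 toric fibration} to obtain $X\cong\mathbb{P}_Y(\mathcal{O}_Y\oplus\mathcal{O}_Y(a))$ and then lift the $\mathbb{P}^1$-bundle structure to the canonical quasi-\'etale universal covers, concluding with Proposition \ref{coveringsing}. The only difference is one of packaging: where the paper invokes \cite{CD08} for the local product structure of the fan and \cite[Lemma 2.2.7]{Mol16} for the cartesian property, you obtain both from the explicit splitting $N_{\Delta_X(1)}=N_{\Delta_Y(1)}\oplus\mathbb{Z}$, which is precisely the content of those references in this situation.
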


\begin{proof}
 By Proposition \ref{thm:rho=2 toric fibration}, $\pi:X\to Y$ is a $\mathbb{P}^1-$bundle and $X$ is isomorphic to $\mathbb{P}_Y(\mathcal{O}_Y\oplus \mathcal{O}_Y(a))$, where $Y$ is a $\mathbb{Q}$-factorial Gorenstein Fano variety with terminal singularities. Moreover, \cite[Theorem 2.4, Corollary 2.5]{Re83} implies that we can always suppose that $\pi$ is induced by the projection $\Pi: \mathbb{Z}^n \to \mathbb{Z}^{n-1}\times \{0\},\;(x_1,\ldots,x_n)\mapsto (x_1,\ldots,x_{n-1},0)$.
 
 Since $\pi$ is locally trivial with reduced fibers, it follows from \cite[Remark 3.3, Remark 3.8]{CD08} that if $\sigma=\operatorname{cone}(u_1,\ldots,u_{n-1},e_n)$ and $\sigma'=\operatorname{cone}(u_1,\ldots,u_{n-1},-e_n)$ are maximal cones in $\Delta_X(n)$ which are sent by $\Pi$ onto a maximal cone ${\tau=\operatorname{cone}(t_1,\ldots,u_{t-1}) \in \Delta_Y(n-1)}$ then we have that, up reordering if necessary, $\Pi(u_i)=t_i$ for $i=1,\ldots,n-1$. 
 
 This holds for every invariant open affine open subset of $Y$ and hence it follows that there is an induced morphism between the canonical quasi-\'etale universal covers, $\widehat{\pi}:\widehat{X}\to \widehat{Y}$ (cf. \cite[Lemma 4.1]{Kaw06}), which is an extremal contraction of fiber type. Moreover, the induced commutative diagram is cartesian in the category of schemes by \cite[Lemma 2.2.7]{Mol16}.
 
 Finally, we have that $\widehat{Y}\cong \mathbb{P}(\lambda_0,\ldots,\lambda_{n-1})$ for some weights $\lambda_0,\ldots,\lambda_{n-1}\in \mathbb{Z}_{>0}$ since $\rho_Y=1$, while Proposition \ref{coveringsing} and Proposition \ref{thm:rho=2 toric fibration} imply that $\widehat{X}$ and $\widehat{Y}$ are Fano Gorenstein varieties with terminal singularities and that there is an isomorphism $\widehat{X}\cong \mathbb{P}(\mathcal{O}_{\mathbb{P}(\lambda_0,\ldots,\lambda_{n-1})}\oplus \mathcal{O}_{\mathbb{P}(\lambda_0,\ldots,\lambda_{n-1})}(a))$.
\end{proof}

\begin{exem}
Let $X$ as in Corollary \ref{prop:1-covering,fiber type} and suppose that $\operatorname{Tors}(\operatorname{Cl}(X))\cong\{0\}$, i.e., that $X\cong \widehat{X}$. The extremal contraction of fiber type
$$ X\to \mathbb{P}(\lambda_0,\ldots,\lambda_{n-1})$$
leads to an isomorphism $X\cong \mathbb{P}(\mathcal{O}_{\mathbb{P}(\lambda_0,\ldots,\lambda_{n-1})}\oplus \mathcal{O}_{\mathbb{P}(\lambda_0,\ldots,\lambda_{n-1})}(a))$. Then,
\begin{itemize}
\item[(a)] $X$ is Gorenstein $\Leftrightarrow$ $\mathbb{P}(\lambda_0,\ldots,\lambda_{n-1})$ is Gorenstein $\Leftrightarrow$ $\lambda_i|h$ for every $i\in\{0,\ldots,n-1\}$, by \cite[Lemma 3.5.6]{CK99}.
\item[(b)] $X$ is terminal $\Leftrightarrow$ $\mathbb{P}(\lambda_0,\ldots,\lambda_{n-1})$ is terminal $\Leftrightarrow$ $\sum_{i=0}^n\left\{\lambda_i\kappa\slash h \right\}\in \{2,\ldots,n-1\}$ for each $\kappa\in \{2,\ldots,h-2\}$, by \cite[Proposition 2.3]{Kas13}.
\item[(c)] $X$ is Fano $\Leftrightarrow$ $0 \leq a \leq i_{\mathbb{P}(\lambda_0,\ldots,\lambda_{n-1})}-1= h -1, $ by Proposition \ref{thm:rho=2 toric fibration} and the formula \cite[Proposition 2.3]{Mori75} for the canonical divisor of $\mathbb{P}(\lambda_0,\ldots,\lambda_{n-1})$.
\end{itemize}
Here, $h=\sum_{i=0}^{n-1}\lambda_i$ and $\{x\}$ denotes the fractional part of $x\in \mathbb{R}$.
\end{exem}

The case of divisorial extremal contractions follows in a similar way.

\begin{lemm}\label{divisorialcovering}
Let $X$ be a $\mathbb{Q}$-factorial projective toric variety and $R\subseteq \overline{\NE}(X)$ an extremal ray defining a divisorial contraction $\varphi_R:X\to X_R$. Then, there is a commutative diagram of toric morphisms
$$
\xymatrix{
\widehat{X} \ar[r]^{\widehat{\varphi_R}} \ar[dd]_{\pi_X} & \widetilde{X_R} \ar[rd]^{\mu} & \\
 & & \widehat{X_R} \ar[ld]^{\pi_{X_R}}\\
X \ar[r]^{\varphi_R} & X_R &
}
$$
that satisfies the following conditions:
\begin{itemize}
\item[(a)] $\pi_X$ and $\pi_{X_R}$ are the corresponding canonical quasi-\'etale universal covers.
\item[(b)] $\mu$ is a quasi-\'etale morphism given by the inclusion of lattices ${N_{\Delta_{X_R}(1)}\subseteq N_{\Delta_X(1)} }$ of index $d_E\geq 1$, where $d_E$ is defined by the condition that the integral generator of ${\mathbb{R}_{\geq 0} u_E \cap N_{\Delta_{X_R}(1)} }$ is $d_Eu_E$, where $V(u_E)$ is the exceptional divisor of $\varphi_R$.
\end{itemize}
Moreover,
\begin{itemize}
\item[(c)] $\widehat{\varphi_R}:\widehat{X}\to \widetilde{X_R}$ is a divisorial contraction with $(\pi_X)_* \Exc(\widehat{\varphi_R})=\Exc(\varphi_R)$ and $(\pi_{X_R}\circ \mu)_* \widehat{\varphi_R}(\Exc(\widehat{\varphi_R}))=\varphi_R(\Exc(\varphi_R))$.
\item[(d)] If $X$ (resp. $X_R$) is a Fano variety then $\widehat{X}$ (resp. $\widehat{X_R}$ and $\widetilde{X_R}$) is.
\item[(e)] If $X$ (resp. $X_R$) has Gorenstein singularities then $\widehat{X}$ (resp. $\widehat{X_R}$ and $\widetilde{X_R}$) does.
\item[(f)] If $X$ has terminal (resp. canonical) singularities, then all varieties in the diagram have terminal (resp. canonical) singularities.
\end{itemize}
\end{lemm}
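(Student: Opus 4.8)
The plan is to reduce everything to the combinatorics of fans, since a divisorial contraction of an extremal ray of a toric variety is a toric birational morphism. First I would record the combinatorial picture: writing $X=X(\Delta_X)$ and $X_R=X(\Delta_{X_R})$ with common lattice $N$, the morphism $\varphi_R$ coarsens $\Delta_X$ to $\Delta_{X_R}$ by removing exactly the exceptional ray $u_E$, so that $\Delta_X(1)=\Delta_{X_R}(1)\sqcup\{u_E\}$. Consequently the two sublattices generated by the ray generators satisfy $N_{\Delta_{X_R}(1)}\subseteq N_{\Delta_X(1)}$ and $N_{\Delta_X(1)}=N_{\Delta_{X_R}(1)}+\mathbb{Z}u_E$. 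A short computation then identifies the index: one has $N_{\Delta_X(1)}/N_{\Delta_{X_R}(1)}\cong \mathbb{Z}u_E/(\mathbb{Z}u_E\cap N_{\Delta_{X_R}(1)})$, and since $u_E$ is primitive in $N$ the smallest positive multiple of $u_E$ lying in $N_{\Delta_{X_R}(1)}$ is precisely $d_Eu_E$; this yields $[N_{\Delta_X(1)}:N_{\Delta_{X_R}(1)}]=d_E$ and establishes (b).

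Next I would define all the varieties and maps through the toric dictionary of Definition \ref{PWS}, using only the chain of lattice maps $N_{\Delta_{X_R}(1)}\hookrightarrow N_{\Delta_X(1)}\hookrightarrow N$ together with the fan coarsening $\Delta_X\to\Delta_{X_R}$. Concretely, $\widehat{X}=X(\Delta_X,N_{\Delta_X(1)})$ and $\widehat{X_R}=X(\Delta_{X_R},N_{\Delta_{X_R}(1)})$ are the canonical quasi-\'etale universal covers $\pi_X,\pi_{X_R}$ (this is (a)); I set $\widetilde{X_R}:=X(\Delta_{X_R},N_{\Delta_X(1)})$, let $\widehat{\varphi_R}$ be the fan coarsening at the level of the lattice $N_{\Delta_X(1)}$, and let $\mu$ be the covering attached to the index-$d_E$ inclusion $N_{\Delta_{X_R}(1)}\subseteq N_{\Delta_X(1)}$. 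Because every face of the diagram is induced by a commuting square of lattice maps over the common fan coarsening, commutativity is automatic; moreover one reads off that $\widehat{X_R}$ is simultaneously the universal cover of $X_R$ and of $\widetilde{X_R}$, since the ray generators $u_\rho$ with $\rho\in\Delta_{X_R}(1)$ generate $N_{\Delta_{X_R}(1)}$ in either lattice.

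I would then verify the morphism-level assertions of (c). As $\Delta_X$ and $\Delta_{X_R}$ differ by the single ray $u_E$, $\widehat{\varphi_R}$ is a divisorial contraction whose exceptional divisor is $V(u_E)$ computed in $N_{\Delta_X(1)}$; since $\pi_X,\pi_{X_R},\mu$ are isomorphisms in codimension one and carry $u_E$ to $u_E$, the two pushforward identities of (c) follow directly. The crucial point is that $\mu$ (and likewise each $\pi$) is genuinely quasi-\'etale: on the maximal torus it is the isogeny dual to the finite group $N_{\Delta_X(1)}/N_{\Delta_{X_R}(1)}$, hence \'etale, while over each codimension-one torus orbit associated to a ray $\rho\in\Delta_{X_R}(1)$ it is again \'etale, because $u_\rho$, being primitive in $N$, stays primitive both in $N_{\Delta_{X_R}(1)}$ and in $N_{\Delta_X(1)}$, so no ramification divisor is created. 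Thus the branch locus sits in codimension $\geq 2$, which is exactly the quasi-\'etale condition and supplies the remaining content of (b) and (c).

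Finally I would transfer the global properties. As $\mu$, $\pi_X$ and $\pi_{X_R}$ are quasi-\'etale, Proposition \ref{coveringsing} immediately propagates the Fano and Gorenstein properties from $X$ and $X_R$ to all of their covers, giving (d) and (e). For (f) I would use that a quasi-\'etale morphism is crepant (there is no ramification divisor, so $\varphi^*K=K$), hence preserves terminal and canonical singularities in both directions; since $\pi_X,\pi_{X_R},\mu$ are quasi-\'etale and share their fans with their targets, the statement reduces to transferring terminality (resp.\ canonicity) across the one genuinely birational arrow $\varphi_R$. The hard part is precisely this last step: I would settle it through the toric discrepancy description behind Theorem \ref{toricsing1} together with Reid's wall relation (\cite[Theorem 2.4, Corollary 2.5]{Re83}), checking that the exceptional divisor $E$ has nonnegative discrepancy over $X_R$, so that $X$ terminal (resp.\ canonical) forces $X_R$---and hence its quasi-\'etale relatives $\widetilde{X_R}$ and $\widehat{X_R}$---to be terminal (resp.\ canonical). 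This discrepancy bookkeeping across the divisorial contraction, and not any of the transparent quasi-\'etale steps, is where I expect the only real difficulty to lie.
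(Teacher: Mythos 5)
Your proposal follows essentially the same route as the paper: the entire diagram is built from the chain of lattice inclusions $N_{\Delta_{X_R}(1)}\subseteq N_{\Delta_X(1)}\subseteq N$ over the common fan coarsening (the paper phrases the coarsening as the inverse of a stellar subdivision along the exceptional ray and deduces finiteness of the index from the wall relation, exactly as you do via $N_{\Delta_X(1)}=N_{\Delta_{X_R}(1)}+\mathbb{Z}u_E$), with (a)--(c) holding by construction and (d)--(f) transferred through Proposition \ref{coveringsing}. The only divergence is that the step you single out as the real difficulty --- passing terminality and canonicity across the birational arrow $\varphi_R$ --- is dispatched in the paper by citing \cite[Corollary 3.43]{KM98} rather than by a hands-on toric discrepancy computation, which amounts to the same argument in packaged form.
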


\begin{proof}
Let us suppose that $\varphi_R:X\to X_R$ is given by the contraction of the wall $\omega=\operatorname{cone}(u_1,\ldots,u_{n-1})$ separating the maximal cones $\sigma=\operatorname{cone}(u_1,\ldots,u_{n-1},u_n)$ and $\sigma'=\operatorname{cone}(u_1,\ldots,u_{n-1},u_{n+1})$. Then, the wall relation satisfied by these cones (defining the contraction) is given by
$$b_u u_n + \sum_{i=1}^{n-1} b_iu_i+b_{n+1}u_{n+1}=0, $$
where $b_n,b_{n+1}\in \mathbb{Z}_{>0}$ and $b_i\in \mathbb{Z}$. 

Since $\varphi_R$ is a divisorial contraction we can suppose that (up to reordering, if necessary) $b_1<0$ and $b_2,\ldots,b_{n-1}\geq 0$, by \cite[Theorem 2.4, Corollary 2.5]{Re83}. Thus, $E=\Exc(\varphi_R)=V(u_1)$ and the contraction corresponds to the stellar subdivision of the cone
$$\sigma=\operatorname{cone}(u_2,\ldots,u_n,u_{n+1})\in \Delta_{X_R}(n) $$
with respect to the primitive lattice vector $u_1$ satisfying the wall relation above.

The canonical quasi-\'etale universal cover of $X$ (resp. $X_R$) is given by the fan $\Delta_X$ (resp. $\Delta_{X_R}$) but seen in the sub-lattice $N_{\Delta_X(1)}$ (resp. $N_{\Delta_{X_R}(1)}$) of $N$.

Clearly we have the inclusion of lattices $N_{\Delta_{X_R}(1)}\subseteq N_{\Delta_X(1)}$, which is of finite index since $(-b_1)u_1 \in N_{\Delta_{X_R}(1)}$, by the wall relation above. Hence, we obtain an induced quasi-\'etale morphism $\mu:\widetilde{X_R}\to\widehat{X_R}$ by \cite[Lemma 3.3]{AP13}.

Now, the fan of $\widehat{X}$ is obtained by the stellar subdivision of the fan of $\widetilde{X_R}$ with respect to the primitive vector $u_1$ satisfying the wall relation above, obtaining the desired commutative diagram that satisfies (a), (b) and (c) by construction.

The last three assertions follows from Proposition \ref{coveringsing} and \cite[Corollary 3.43]{KM98}.
\end{proof}

We can now prove Proposition \ref{prop:1-covering,divisorial}.

\begin{proof}[Proof of Proposition \ref{prop:1-covering,divisorial}]
The situation in Lemma \ref{divisorialcovering} above becomes simpler in this case because the extremal contraction $\pi:X\to Y$ is induced by a wall relation of the form $u_n+u_{n+1}=u_1$, by Lemma \ref{toricdivisorial}. Therefore, we have that $N_{\Delta_Y(1)}= N_{\Delta_X(1)}$ and hence $\widetilde{Y}\cong \widehat{Y}$ with the notation as in Lemma \ref{divisorialcovering}. In other words, there is an induced morphism between the canonical quasi-\'etale universal covers, $\widehat{\pi}:\widehat{X}\to \widehat{Y}$, which is a divisorial extremal contraction. Moreover, the induced commutative diagram is cartesian in the category of schemes by \cite[Lemma 2.2.7]{Mol16}.
 
Finally, we have that $\widehat{Y}\cong \mathbb{P}(\lambda_0,\ldots,\lambda_n)$ for some weights $\lambda_0,\ldots,\lambda_n\in \mathbb{Z}_{>0}$, by Example \ref{FWPS}. The result follows now directly from the second part of Lemma \ref{divisorialcovering}.
\end{proof}

\begin{exem}

Let $X$ as in Proposition \ref{prop:1-covering,divisorial} and suppose that $\operatorname{Tors}(\operatorname{Cl}(X))\cong \{0\}$, i.e., that $X\cong \widehat{X}$. The extremal divisorial contraction
$$\pi:X\to \mathbb{P}(\lambda_0,\ldots,\lambda_n) $$
determines the shape of the fan of $X$ in terms of the fan of $\mathbb{P}(\lambda_0,\ldots,\lambda_n)$ (which is well known):

The fan of $\mathbb{P}(\lambda_0,\ldots,\lambda_n)$ is given by $n+1$ lattice primitive vectors $u_0,\ldots,u_n$ that generates the lattice $N$ and that satisfy the relation
$$\sum_{i=0}^n \lambda_i u_i=0. $$
Let us suppose that $\pi$ contracts $E=V(u_E)\subseteq X$ onto the invariant subvariety $A=V(u_i,u_j)\subseteq \mathbb{P}(\lambda_0,\ldots,\lambda_n)$, of codimension two. Then, $u_E=u_i+u_j$, by Lemma \ref{toricdivisorial}.

In particular, the same computation used to prove \cite[Lemma 3.5.6]{CK99} shows that $X$ is a Fano Gorenstein variety if and only if $\lambda_i|h$, $\lambda_j|h$, $\lambda_k|(h-\lambda_i)$ and $\lambda_k|(h-\lambda_j)$ for every $k\neq i,j$, where $h=\sum_{i=0}^n\lambda_i$.

However, to the best of the author's knowledge, the characterization such $X$ having terminal singularities is more subtle. Indeed, if $X$ is a $\mathbb{Q}$-factorial Fano Gorenstein toric variety then it corresponds to a simplicial reflexive lattice polytope $P\subseteq N_\mathbb{R}$ (see \cite{Bat94} for details). In \cite[Corollary 3.7]{Nill05}, B. Nill characterizes all polytopes among these ones that correspond to varieties with only terminal singularities, but it does not seem easy to translate this characterization into a function of the weights $\lambda_0,\ldots,\lambda_n$.

Finally, it should be noticed that if one of the weights is equal to 1, say $\lambda_0=1$, then we have a coordinate-wise description of the primitive vectors defining the fan of $\mathbb{P}(1,\lambda_1,\ldots,\lambda_n)$. Namely, the canonical basis of $\mathbb{Z}^n$ together with the vector $(-\lambda_1,\ldots,-\lambda_n)\in \mathbb{Z}^n$. In this case, we can explicitly compute the Cartier data $\{m_\sigma\}_{\sigma\in \Delta_X(n)}\subseteq M$ of $K_X$, allowing us to decide whether the singularities of $X$ are terminal or not.

In particular, we compute that $X$ has only Gorenstein terminal singularities if all the integers
$$\dfrac{h}{\lambda_i},\dfrac{h}{\lambda_j},\dfrac{h-\lambda_i}{\lambda_k},\dfrac{h-\lambda_j}{\lambda_k} $$
considered before, are equal or greater than 3. The variety defined in Example \ref{contreexample} satisfy this condition.
\end{exem}

\end{document}